\newtheorem{aff}{Assertion}
\newtheorem{teo}{Theorem}[section]
\newtheorem{prop}[teo]{Proposition}
\newtheorem{ddef}[teo]{Definition}
\newtheorem{example}[teo]{Example}
\newtheorem{cor}[teo]{Corollary}
\newtheorem{lem}[teo]{Lemma}
\newtheorem*{cor*}{Corollary}
\newtheorem*{teo*}{Theorem}
\newtheorem*{assertion}{Assertion}
\newcommand{\cpt}[1]{\mathbb{C}P^{2}}
\newcommand{\pe}{\mathbb{P}}
\newcommand{\sing}{{\rm Sing}}
\newcommand{\Z}{\mathbb{Z}}
\newcommand{\C}{\mathbb{C}}
\newcommand{\F}{\mathcal{F}}
\newcommand{\G}{\mathcal{G}}
\newcommand{\cl}[1]{\mbox{$\mathcal{#1}$}}
\newcommand{\codim}{{\rm codim}}
\newcommand{\tang}{{\rm Tang}}
\newcommand{\Pf}{\mathfrak{X}_{sd}}
\newcommand{\ds}[1]{\displaystyle{#1}}
\begin{document}

\setcounter{section}{0}
\setcounter{teo}{0}
\setcounter{exe}{0}

\author{Dan\'ubia Junca \& Rog\'erio   Mol   }
\title{Holomorphic vector fields tangent to   foliations in dimension three}

\footnotetext[1]{ {\em 2010 Mathematics Subject Classification:}
 32S65, 37F75}
 \footnotetext[2]{{\em
Keywords.} Holomorphic foliations,  holomorphic vector fields, pencil of foliations}
\footnotetext[3]{First author supported by a CAPES Sandwich Doctorate scholarship. Second author supported by Universal/CNPq. }

\begin{abstract} This article studies germs of holomorphic vector fields
at $(\C^{3},0)$ that are tangent to holomorphic foliations of codimension one.
Two situations are considered. First, we assume hypotheses   on the reduction of singularities
of the vector field --- for instance, that the final models  belong to a   family of vector fields
 whose eigenvalues of the linear part   satisfy a condition of non-resonance ---
in order to conclude that the foliation is of complex hyperbolic type, that is, without saddle-nodes in its
reduction of singularities. In the second part, we prove that a vector field that
is tangent to three independent foliations is tangent to a whole pencil of foliations --- hence, to infinitely many foliations --- and,
as a consequence, it leaves invariant a germ of analytic surface. This final part is based on a local
version of a well-known characterization of pencils of foliations of codimension one in projective spaces.
\end{abstract}

\maketitle


\section{Introduction}

The main goal of this article is to study, at the origin of $\C^{3}$,
germs of holomorphic vector fields that are \emph{tangent} to holomorphic foliations of codimension
one.
 If $X$ as a germ of holomorphic vector field at $(\C^{3},0)$,
inducing a germ of singular holomorphic foliation of dimension one $\F$,
and $\omega$ is a germ of  holomorphic $1-$form which satisfies the
integrability condition --- $\omega \wedge d \omega =0$ ---
inducing a germ of singular holomorphic foliation of codimension one $\G$,
we say that $X$ (or $\F$) is tangent to $\omega$ (or to $\G$) if the
orbits of $X$ are entirely contained in the two-dimensional leaves of $\omega$,
wherever both objects are defined. We also say that $\omega$ (or  $\G$) is \emph{invariant}
by $X$ (or by $\F$). In algebraic terms, this is identified by the vanishing of the contraction of $\omega$ by $X$,
that is, $i_{X} \omega = 0$.

One interesting point  is   that, due to the    the integrability condition of   $\omega$,
  not every   germ of holomorphic vector field   at $(\C^{3},0)$
 is   tangent to a homomorphic   foliation. Examples of this situation
 are presented in  two
 recent studies where the configuration of tangency between a vector field
and a foliation is considered.
The first one, by F. Cano and C. Roche \cite{cano2014}, asserts that  a germ of holomorphic vector field $X$
at $(\C^{3},0)$ tangent to a foliation has a reduction of singularities. This means that,  after a finite sequence of
blow-ups with invariant centers (points or regular invariant  curves in general position with
the reduction divisor), the one-dimensional foliation induced by $X$ is transformed into
one  for which all singularities are \emph{elementary}, meaning
that they are locally defined by vector fields with non-nilpotent linear part.
Vector fields in a     family   proposed by F. Sanz and F. Sancho (see also \cite{cano2014})
do not admit a reduction of singularities as above and, hence, are not tangent
to any germ of holomorphic foliation.
In the second one, by D. Cerveau and A. Lins Neto \cite{cerveau2018},
it is proved that  a germ of holomorphic vector field with
isolated singularity at $0 \in \C^{3}$ that  is tangent to a holomorphic foliation always
   admits
a separatrix, that is, an invariant analytic curve. As a consequence,  vector fields in the
family of X.Gómex-Mont and I. Luengo \cite{gomez-mont1992}, which do not posses separatrices,
are not   tangent to holomorphic foliations.

The afore mentioned studies suggest that consequences of geometric nature
 arise when there is  tangency between a vector field and
a foliation.
This perception is the main motivation for this article and  shall
be developed  in two different and independent approaches.
First, in Section \ref{section-sp} we present    the concept of strongly diagonalizable
germ of vetor field  (Def. \ref{def-sp}), meaning that   its linear part   has    eigenvalues
that     do not satisfy any non-trivial relation of linear dependency with integer coefficients.
We characterize the $1-$forms that are tangent  to vector fields of this type  (Prop. \ref{proposicaoch}). In the main result of the section, we consider a germ of integrable holomorphic $1-$form $\omega$ at $(\C^{3},0)$ that leaves invariant a germ of holomorphic vector field $X$,  putting the following hypotheses on the reduction of singularities of $X$ (which exists by Cano-Roche's result):
it is composed only by punctual blow-ups (we say in this case $X$ has an \emph{absolutely isolated} singularity
$0 \in \C^{3}$), the   divisor associated to this sequence of blow-ups is invariant
by the transformed foliation (that is, $X$ is \emph{non-dicritical}) and that
all final models are strongly diagonalizable singularities. Under these assumptions on
$X$, we prove  in Theorem \ref{teo-sp-ch}  that $\omega$ defines a foliation of codimension one  which
is   \emph{complex hyperbolic} --- this notion is an extension for foliations of codimension one in higher dimensions
 of the widely studied concept  of \emph{generalized curve} foliations in dimension two
 (see  definitions in Section \ref{section-preliminaries}).

In Section \ref{section-pencil}, we investigate the   situation where a germ of holomorphic
 vector field at $(\C^3,0)$ is tangent to three independent foliations, induced by
 germs of integrable holomorphic
 $1-$forms $\omega_{1}, \omega_{2}$ and $\omega_{3}$. We prove that, in this case,
 up to multiplication by germs of functions in $\cl{O}_{3}$,
 these $1-$forms define a \emph{pencil of integrable $1-$forms}, that is, a two-dimensional
 linear space --- which becomes one-dimensional when projectivized --- in the space of integrable holomorphic $1-$forms. As a consequence, $X$ is tangent to the infinitely many
 integrable $1-$forms in this pencil.
In Theorem \ref{pencil-foliations}, we present a geometric characterization of pencil of integrable   $1-$forms,
stated in the more general context of foliations at $(\C^{n},0)$, $n \geq 3$,
which is a local version of the one given by D. Cerveau in \cite{cerveau2002}.
It asserts that there is  a closed meromorphic $1-$form $\theta$ such that $d \omega = \theta \wedge \omega$ for all $1-$forms $\omega$ in the pencil  or the \emph{axis foliation}--- that is, the unique  foliation of codimension two
 that is tangent to all $1-$forms in the   pencil --- has a meromorphic first integral.
 As a consequence, the axis foliation always admits an invariant hypersurface.
 This, translated into our original three-dimensional context, gives us that a  germ of vector field at $(\C^{3},0)$ that is tangent to
 three independent germs of holomorphic foliations leaves invariant a germ of analytic surface.

 This paper contains partial results of the Ph.D thesis of the first author.
She is thanks N. Corral and the University of Cantabria for hospitality during
the development of part of this research.

\section{Preliminaries}

\label{section-preliminaries}

In local analytic coordinates $(x_{1}, x_{2}, x_{3})$
at $(\C^{3},0)$, we denote a germ of holomorphic  vector field at $(\C^{3},0)$
by
$$X = A \frac{ \partial}{\partial x_{1}} + B \frac{ \partial}{\partial x_{2}}
+ C \frac{ \partial}{\partial x_{3}},$$
where $A,B,C\in \cl{O}_{3}$.
We also consider the germ of singular one-dimensional foliation $\F$ whose leaves are the orbits of $X$. Thus, in order to avoid superfluous singularities, we   suppose that
$A,B$ and $C$ are without common factors. The singular sets (of $X$ or $\F$) are denoted  by
$\sing(X) = \sing(\F) = \{A= B = C = 0\}$,  being an analytic set of dimension at most one.
We  denote a germ of of holomorphic  $1-$form at $(\C^{3},0)$ by
$$\omega = a d x_{1} + b x_{2} + c dx_{3},$$
 where $a,b,c \in \cl{O}_{3}$, which
are also supposed to be without common factors. If $\omega$ is integrable in the sense of
Frobenius, that is, $\omega \wedge d \omega = 0$, it induces a germ of singular holomorphic foliation
of codimension one, denoted by $\G$. We have
$\sing(\omega) = \sing(\G) = \{a =b =c = 0\}$, also an analytic set of dimension at most one.
A \emph{separatrix} for a local holomorphic foliation  is an irreducible germ of invariant analytic variety
of the same dimension of the foliation. For   germs of holomorphic vector fields  and integrable $1-$forms   at $(\C^{3},0)$,   separatrices are  germs of
  invariant curves and surfaces, respectively.

We say that $X$ (or $\F$) is tangent to $\omega$ (or to $\G$) if
\[ i_{X} \omega = a A + b B + c C = 0 ,\]
which is equivalent to saying that, outside $\sing(X) \cup \sing(\omega)$, the orbits of $X$ are contained
in the two-dimensional leaves of  $\G$.
If the germ of vector field $X$ is tangent to the integrable $1-$form $\omega$,
then $\sing(\omega)$ is invariant by $X$
(see, for instance, \cite[Th. 1]{mol2011}). Thus, the one-dimensional components of $\sing(\omega)$ ---
if they exist --- are separatrices of $X$.

One particular example of this configuaration of tangency is provided by  vector fields with
first integrals. A non-constant germ of meromorphic --- or holomorphic --- function $\Phi$ at $(\C^{3},0)$ is a \emph{first integral}
for $X$ if, in a small neighborhood of $0 \in \C^{3}$, $\Phi$ is constant along the orbits of $X$.
This is equivalent to saying that $X$ is tangent to the foliation defined by $\Phi$, which
is induced by the holomorphic $1-$form obtained by cancelling the components of zeros and poles of the meromorphic
 $1-$form $d \Phi$ (we also say that $\Phi$ is a first integral for this foliation).
Note that in this case, any fiber of $\Phi$ accumulating to the origin is an $X$-invariant surface.
We should mention that, in the article \cite{pinheiro2014}, the authors study  germs of vector fields   at $(\C^{3},0)$ that are \emph{completely integrable} --- that is, that have two independent holomorphic first integrals --- and prove that this property is
not a topological invariant.

In dimension two, a germ of holomorphic foliation is induced in local analytic coordinates
 $(x_{1},x_{2})$ at   $(\C^{2},0)$  by
 a germ of holomorphic vector field $X = A  \partial / \partial x_{1}  + B \partial / \partial x_{2}$
 with isolated singularity at the origin (or
 by the dual $1-$form $\omega = B  dx_{1}  - Ad x_{2}$).
Recall that such a foliation has a reduction of singularities, that is,   a finite sequence of blow-ups
transforms it into one having a finite number of singularities which are  \emph{simple} or \emph{reduced} \cite{seidenberg1968}. Such a singularity is locally
 induced by a vector field  whose linear part is non-nilpotent, having two eigenvalues $\lambda_{1}$ and $\lambda_{2}$  such that, if  both   non-zero, do not satisfy any non-trivial relation of the kind
$m_{1}\lambda_{1} + m_{2} \lambda_{2} = 0$ with $m_{1},m_{2} \in \Z_{\geq 0}$. These simple singularities are called
\emph{non-degenerate}, whereas the ones having one zero eigenvalue are called \emph{saddle-nodes}.
A foliation  at $(\C^{2},0)$ is said to be of \emph{generalized curve}  type \cite{camacho1984} if it has no saddle-nodes in some (and hence in any) reduction of singularities.
Several geometric properties of a foliation of generalized curve type can be read in  its separatrices. For example,   a sequence of blow-ups that desingularizes
its set of separatrices    is also a reduction of
singularities for the foliation  itself \cite{camacho1984}.

A foliation of codimension one at $(\C^{3},0)$ also admits a reduction of singularities \cite{cerveau1992,cano2004}. This means that after a finite sequence of blow-ups with invariant centers
--- points and regular curves satisfying a condition of  general position with respect to the
 divisor ---, the foliation is transformed into one whose singularities are all   \emph{simple} or \emph{reduced}, which, in analogy with the two-dimensional case, are essentially of two kinds:
simple complex hyperbolic singularities and simple saddle-node singularities (see, for instance, the
description in \cite{cuzzuol2018}).
Recall that the \emph{dimensional type} of a codimension one foliation,    $\tau \geq 1$, is the
smallest number of variables needed to express its defining equation
in some system of analytic coordinates.
We say that a simple singularities is of  complex hyperbolic type
\cite{cano2015} if  there
are analytic coordinates $(x_{1},x_{2},x_{3})$ at $0 \in \C^{3}$ in which the foliation is defined by
a holomorphic $1-$form $\omega$ whose   terms of lowest order are: \smallskip
\begin{itemize}
\item \ $\ds{x_{1} x_{2} \left( \lambda_{1} \frac{d x_{1}}{x_{1}}+  \lambda_{2} \frac{d x_{2}}{x_{2}}
 \right)}$, if $\tau = 2$, \medskip
\item \ $\ds{x_{1} x_{2} x_{3} \left(  \lambda_{1}   \frac{d x_{1}}{x_{1}}+  \lambda_{2}   \frac{d x_{2}}{x_{2}}
+ \lambda_{3} \frac{d x_{3}}{x_{3}}  \right)}$, if $\tau = 3$,
\end{itemize}
where the residues $\lambda_{i} \in \C^{*}$   are non-resonant,  that is
there are no non-trivial relations of the kind    $m_{1} \lambda_{1}+ m_{2} \lambda_{2}= 0$ (for $\tau = 2$)
or $m_{1} \lambda_{1}+ m_{2} \lambda_{2}+ m_{3}\lambda_{3}= 0$ (for $\tau = 3$),
with $m_{i} \in \Z_{\geq 0}$.  Simple complex hyperbolic singularities, when $\tau =2$, correspond
to simple non-degenerate singularities of foliations in dimension two.

Now,  a foliation $\G$ at $(\C^{3},0)$ induced by an integrable holomorphic $1-$form $\omega$
is of \emph{complex hyperbolic} type if it satisfies one of the two equivalent properties \cite{cano2015}:
\begin{itemize}
\item There exists a \emph{complex hyperbolic} reduction of singularities for $\G$, that is, one  for which all final models are simple complex hyperbolic.
In this case, every reduction of singularities of $\G$ will be complex hyperbolic.
\item for every holomorphic map $\phi: (\C^{2},0) \to(\C^{3},0)$ generically transversal to $\G$ (that is,
such that $\phi^{*} \omega$ has an isolated singularity at $0 \in \C^{2}$) the foliation $\pi^{*} \F$ induced
by $\phi^{*} \omega$ is of generalized curve type.
\end{itemize}
Complex hyperbolic foliations are the    three-dimensional counterparts of generalized curve foliations. It is thus expected that some geometric properties enjoyed by the latter also
have a formulation for the former. For instance, in the non-dicritical case (that is, if the reduction divisor is invariant by the transformed foliation), its proved in \cite{fernandez2009} that
a complex hyperbolic foliation becomes reduced once its set of separatrices is desingularized.

\section{Strongly diagonalizable vector fields}
\label{section-sp}
In this section we introduce the notion of strongly diagonalizable germs of vector fields.  
A vector field  in this family has a linear part with eigenvalues   satisfying  a  hypothesis of non-resonance, being,
 as a consequence,  linearizable in formal coordinates.
We show that by assuming hypothesis   on
the reduction of singularities of a germ of  vector field at $(\C^{3},0)$
  that is tangent to
a   holomorphic foliation of codimension one --- among them, that the final models belong to
this family of strongly diagonalizable vector fields --- we are able to
conclude that the foliation is of complex hyperbolic type.

Recall that  a vector $\alpha = (\alpha_1, \cdots, \alpha_n)
\in  \mathbb{C}^n $, $n \geq 2$,  is \emph{non-resonant}
if there are no relations of the form $\alpha_{j} = \ell_1 \alpha_1 + \cdots +\ell_{n} \alpha_n = 0$,
with $\ell_1, \ldots, \ell_n \in \Z_{\geq 0}$ satisfying $\sum_{j=1}^{n}\ell_i \geq 2$.
A classical result asserts that a  germ of complex analytic vector field $X$ at $(\C^{n},0)$ whose associated
eigenvalues  (i.e. those of its linear part $DX(0)$) are non-resonant is linearizable in formal coordinates.
We remark that these linearizing coordinates can be taken to be analytic if these eigenvalues belong to the Poincaré domain --- i.e. the set of vectors
$\alpha = (\alpha_1, \cdots, \alpha_n)
\in  \mathbb{C}^n $ such that the origin $0 \in \C^{n}$ is not in
the convex hull of $\{\alpha_1, \cdots, \alpha_n\}$.

In the next definition, we   work with the following  notion:
a vector $ \alpha = (\alpha_1, \cdots, \alpha_n)
\in  \mathbb{C}^n $ is said to be \emph{strongly non-resonant}
if there are no non-trivial relations of the form $\ell_1 \alpha_1 + \cdots +\ell_{n} \alpha_n = 0$,
with $\ell_1, \ldots, \ell_n \in \Z$. Such a non-trivial relation will be called      \emph{strong resonance}. We have:

\begin{ddef}
\label{def-sp}
{\rm
A germ of holomorphic vector field  at $(\mathbb{C}^n,0)$, $n \geq 2$,  is said to be  \emph{strongly diagonalizable} if
 its   associated eigenvalues are
strongly non-resonant.
}\end{ddef}
We denote the family of strongly diagonalizable vector fields by $\Pf$. 
Clearly, a vector field in $\Pf$ satisfies the usual condition of non-resonance, being
linearizable in formal coordinates. Further, the
  associated  eigenvalues  are nonzero and pairwise distinct,
implying   that its linear part   is  diagonalizable.
We  also say that a germ of one-dimensional foliation $\F$ at $(\mathbb{C}^n,0)$ is strongly diagonalizable if it is induced by a vector field in $\Pf$. Evidently, this definition does not depend on the choice of the vector field in $\Pf$ inducing $\F$.

In the sequel, we restrain ourselves to  ambient dimension $n=3$.  Thus, if $X$ is a germ of holomorphic vector field at $(\C^{3},0)$  in
  $\Pf$, we can take local formal coordinates $(x_{1},x_{2},x_{3})$ such that
\begin{equation}
\label{eq-inv}
X=\alpha_{1}x_1\dfrac{\partial}{\partial x_1}+\alpha_{2}x_2\dfrac{\partial}{\partial x_2}+\alpha_{3}x_3\dfrac{\partial}{\partial x_3},
\end{equation}
where   $\alpha_{1}, \alpha_{2}, \alpha_{3} \in \C^{*}$ are pairwise distinct.
Note that if we choose numbers $b_{1}, b_{2}, b_{3} \in \C$, not
all of them zero, satisfying $\alpha_{1}b_{1} + \alpha_{2}b_{2} +\alpha_{3}b_{3}=0$,
then $X$ is tangent to   the formal   meromorphic $1-$form
$$\omega = x_{1}x_{2}x_{3} \left(b_{1} \frac{dx_{1}}{x_{1}}  + b_{2}\frac{dx_{2}}{x_{2}} + b_{3} \frac{dx_{3}}{x_{3}}\right).$$

We start by  proving a simple lemma:
\begin{lem}
\label{lemainvariante}
A vector field in  $X \in \Pf$ has exactly three formal smooth separatrices, which correspond to the coordinate axes in its diagonalized   form.
\end{lem}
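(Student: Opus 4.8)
The plan is to work in the formal coordinates $(x_1,x_2,x_3)$ in which $X$ takes the diagonal form \eqref{eq-inv}, which exist because a strongly diagonalizable vector field is in particular non-resonant, hence formally linearizable, with pairwise distinct nonzero eigenvalues $\alpha_1,\alpha_2,\alpha_3$. Since a formal change of coordinates preserves both the invariance and the smoothness of a formal curve, it suffices to count the smooth formal separatrices of the diagonal field; these correspond to smooth formal separatrices of $X$ in the original coordinates. The three coordinate axes $\{x_j=x_k=0\}$ are evidently smooth and invariant, so there are at least three such separatrices, and the whole point is to rule out any others.

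First I would show that the tangent direction of any smooth formal separatrix is an eigendirection of the linear part $DX(0)=\mathrm{diag}(\alpha_1,\alpha_2,\alpha_3)$. Writing such a separatrix as a formal immersion $\gamma(t)=tv+O(t^2)$ with $v\neq 0$, invariance by $\F$ is the condition $\gamma'(t)\wedge X(\gamma(t))=0$. Since $X(\gamma(t))=t\,DX(0)v+O(t^2)$, the coefficient of $t$ in this wedge is $v\wedge DX(0)v$, which must vanish. Its $2\times 2$ minors are $v_iv_j(\alpha_j-\alpha_i)$ for $i\neq j$, so the pairwise distinctness of the $\alpha_i$ forces at most one coordinate of $v$ to be nonzero; that is, $\gamma$ is tangent to one of the coordinate axes.

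Next, assuming $\gamma$ is tangent to the $x_1$-axis, the function $x_1$ is a formal parameter along $\gamma$, so after reparametrization $\gamma(t)=(t,\phi(t),\psi(t))$ with $\phi,\psi\in t^2\C[[t]]$. The proportionality of $\gamma'(t)=(1,\phi'(t),\psi'(t))$ and $X(\gamma(t))=(\alpha_1 t,\alpha_2\phi(t),\alpha_3\psi(t))$ yields the two relations $\alpha_1 t\,\phi'(t)=\alpha_2\phi(t)$ and $\alpha_1 t\,\psi'(t)=\alpha_3\psi(t)$. Writing $\phi(t)=\sum_n c_n t^n$ and comparing coefficients of $t^n$ in the first relation gives $(\alpha_1 n-\alpha_2)\,c_n=0$ for every $n$; the strong non-resonance of $(\alpha_1,\alpha_2,\alpha_3)$ forbids $\alpha_2=n\alpha_1$ for any $n\in\Z$, hence $\phi\equiv 0$, and likewise $\psi\equiv 0$. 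Thus the only smooth formal separatrix tangent to the $x_1$-axis is the axis itself, and the symmetric argument handles the other two axes. Together with the previous step, this shows there are exactly three smooth formal separatrices, namely the coordinate axes.

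The main point to handle carefully is the bookkeeping in the formal category: one must check that the reparametrization by $x_1$ is legitimate (which follows from smoothness together with tangency to the axis) and that the proportionality factor relating $\gamma'$ and $X\circ\gamma$ does not cause trouble at the origin, where $X$ vanishes. This is precisely why I prefer the minor/wedge formulation, which avoids dividing by the vanishing factor and reduces everything to the purely algebraic recursion $(\alpha_1 n-\alpha_2)c_n=0$ that the strong non-resonance hypothesis is designed to kill.
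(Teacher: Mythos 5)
Your proof is correct and follows essentially the same route as the paper's: pass to formal diagonalizing coordinates, parametrize a smooth invariant curve by the coordinate along its tangent direction, and use strong non-resonance to force the transverse components to vanish. The only differences are cosmetic --- you use the wedge condition $\gamma'\wedge X(\gamma)=0$ and the coefficient recursion $(n\alpha_1-\alpha_2)c_n=0$, whereas the paper introduces the proportionality factor $\Phi(t)$ and compares residues in $g'/g=\alpha_2/\Phi$; your formulation is in fact slightly cleaner since it avoids dividing by quantities vanishing at the origin.
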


\begin{proof} We take $X$   in its diagonal form \eqref{eq-inv}.
Suppose, without loss of generality, that an $X$-invariant curve $\gamma$ is parametrized as $\gamma(t)=(at+f(t),g(t),h(t))$  where $a\in \C^{*}$ and $f,g,h \in \hat{\mathcal{O}}_1$ are non-units, with $f$ of order at least two. We have to prove that $g=h=0$.
Suppose, for instance, $g \neq 0$. The condition of invariance is
expressed as
\[\Phi(t)(a+f^{\prime}(t),g^{\prime}(t),h^{\prime}(t))=(\alpha_{1}(at+f(t)),\alpha_{2}g(t),\alpha_{3}h(t)),\]
for some $\Phi \in \hat{\cl{O}}_{1}$ of the form $\Phi(t)=\alpha_{1} t+\rho(t)$, where   $\nu_{0}(\rho) \geq 2$.
The above equation gives us
\[
\dfrac{\alpha_{2}}{\Phi(t)}=\dfrac{g^{\prime}(t)}{g(t)}.
\]
Comparing residues in this formula, we find
$\alpha_{2}/\alpha_{1}=m$, where   $m = \nu_0(g)$. This, however, gives a strong resonance for the vector $(\alpha_{1}, \alpha_{2}, \alpha_{3})$, in contradiction with our hypothesis. Therefore $g = 0$ and, in a similar way, $h=0$, giving that  $\gamma$ is contained in the $x_1$-axis.
\end{proof}

Blow-ups preserve the family of strongly diagonalizable vector fields.   More precisely, we have the following:
\begin{lem}
Let   $ X \in \Pf$.  Then the strict transform of the one-dimensional foliation induced by  $X$ by  a blow-up  with smooth invariant center   is locally  given by vector fields in $\Pf$.
\end{lem}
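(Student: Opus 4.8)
The plan is to show that the eigenvalues of the strict transform at any singular point on the exceptional divisor are again strongly non-resonant, by expressing them as integer-affine combinations of the original eigenvalues $\alpha_1,\alpha_2,\alpha_3$. First I would fix $X$ in its diagonal form \eqref{eq-inv}. By Lemma \ref{lemainvariante}, the smooth invariant centers available for blow-up are the three coordinate axes (or the origin), so it suffices to treat the point blow-up and the blow-up along one coordinate axis; I would do the computation in each standard affine chart of the blow-up.

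For the point blow-up, in the chart $x_1 = u_1$, $x_2 = u_1 u_2$, $x_3 = u_1 u_3$, a direct substitution transforms $X$ into $u_1$ times a vector field whose linear part at the origin of the chart is diagonal with eigenvalues $\alpha_1$, $\alpha_2 - \alpha_1$, $\alpha_3 - \alpha_1$ (and symmetrically in the other two charts). For the blow-up along, say, the $x_1$-axis, using $x_2 = v_2$, $x_3 = v_2 v_3$ (and the symmetric chart), the new eigenvalues at a singular point are of the form $\alpha_i$ and differences $\alpha_j - \alpha_k$. In every case the new eigenvalue triple is obtained from $(\alpha_1,\alpha_2,\alpha_3)$ by an invertible integer linear change of coordinates on the eigenvalue lattice. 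Hence any strong resonance $\sum_i \ell_i' \beta_i = 0$ with $\ell_i' \in \Z$ among the new eigenvalues $\beta_i$ pulls back to a nontrivial integer relation among $\alpha_1,\alpha_2,\alpha_3$, contradicting the hypothesis that $X \in \Pf$. This shows the new eigenvalues are strongly non-resonant.

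It remains to check that the transformed object is genuinely a vector field in $\Pf$ and not merely one with strongly non-resonant eigenvalues after dividing by an exceptional factor: the strict transform is defined by factoring out the maximal power of the exceptional coordinate, and the computation above already displays the strict transform directly (the factor $u_1$, resp. $v_2$, is removed), with a diagonal, hence diagonalizable, linear part having nonzero pairwise-distinct eigenvalues. Since strong non-resonance forces the $\beta_i$ to be nonzero and distinct, Definition \ref{def-sp} is satisfied at each singular point on the divisor, and the strict transform is locally in $\Pf$ as claimed.

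The step I expect to require the most care is bookkeeping the affine charts and confirming that the relevant singular point of the strict transform really sits at the origin of the chart with the stated diagonal linear part, rather than elsewhere on the divisor; one must verify, using Lemma \ref{lemainvariante}, that the only singularities of the transform lying on the exceptional set are these coordinate-origin points, so that the non-resonance check covers every final model. The linear-algebra core — that an integer-unimodular change of the eigenvalue vector preserves strong non-resonance — is immediate, so the substance of the proof is the explicit chart computation together with the reduction, via Lemma \ref{lemainvariante}, to centers that are coordinate axes.
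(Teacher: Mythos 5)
Your proposal is correct and follows essentially the same route as the paper: diagonalize $X$, compute the strict transform explicitly in the blow-up charts (punctual and monoidal, with the monoidal center forced to be a coordinate axis by Lemma \ref{lemainvariante}), and observe that the new eigenvalues are integer linear combinations of $(\alpha_1,\alpha_2,\alpha_3)$ via an invertible integer matrix, so any strong resonance among them would pull back to one for $X$. Your unimodular-lattice phrasing and the explicit check that the singularities of the strict transform sit at the chart origins are just slightly more systematic packagings of the computation the paper performs directly.
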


\begin{proof}
Take formal diagonalizing coordinates    for $X$.
For a punctual blow-up at $0\in \C^{3}$, consider
  coordinates $x_1^{\ast}= x_1,x_2^{\ast}=x_2/ x_1$ and $x_3^{\ast}=x_3/ x_1$. In these coordinates, the   strict  transform of $ X $ is
\[
\widetilde{X}=\alpha_{1}x_1^{\ast}\dfrac{\partial}{\partial x_1^{\ast}}+(\alpha_{2}-\alpha_{1})x_2^{\ast}\dfrac{\partial}{\partial x_2^{\ast}}+(\alpha_{3}-\alpha_{1})x_3^{\ast}\dfrac{\partial}{\partial x_3^{\ast}},
\]
having an isolated singularity at $(x_1^{\ast}, x_2^{\ast}, x_3^{\ast}) = (0,0,0)$.  We only have to check that the eigenvalues of $\widetilde{X}$ are strongly non-resonant.
However, a relation of the kind
\[ 0 = c_{1}\alpha_{1} +  c_{2}(\alpha_{2}-\alpha_{1})+ c_{3}(\alpha_{3}-\alpha_{1}) =
(c_{1}-c_{2}-c_{3})\alpha_{1}+c_{2}\alpha_{2}+c_{3}\alpha_{3},
\]
for $c_{1}, c_{2}, c_{3} \in \Z$, is possible if and only if $c_{1}=c_{2}=c_{3}=0$, since the eigenvalues
of $X$ are strongly non-resonant.

In the case of a monoidal blow-up, its     smooth invariant center must be one of the coordinate axes, by Lemma \ref{lemainvariante}.
For instance, fixing the $ x_3 $-axis as the blow-up center  and taking blow-up charts $ x_1 = x_1^{\ast}, x_2 = x_2^{\ast}$ and $ x_3^{\ast} = x_3 / x_2 $,     the strict transform of  $X$ is
\[
\tilde{X}=\alpha_{1} x_1^{\ast}\dfrac{\partial}{\partial x_1^{\ast}}+\alpha_{2} x_2^{\ast}\dfrac{\partial}{\partial x_2^{\ast}}+(\alpha_{3}-\alpha_{2})x_3^{\ast}\dfrac{\partial}{\partial x_3^{\ast}}.
\]
Again, the absence of strong resonances for the eigenvalues of $\tilde{X}$ follows from that of $X$.
\end{proof}

As a consequence, we have:
\begin{cor}
The only formal separatrices of a vector field in $\Pf$ are those corresponding to   the   coordinate axes
in its diagonal form.
\end{cor}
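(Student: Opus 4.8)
The plan is to reduce the general case to the smooth case treated in Lemma~\ref{lemainvariante}, by resolving the singularities of the separatrix and exploiting that blow-ups at invariant centers keep the vector field inside $\Pf$. Let $\gamma$ be a formal separatrix of $X\in\Pf$, with $X$ taken in its diagonal form \eqref{eq-inv}. If $\gamma$ is smooth there is nothing to prove, so I would argue by induction on the length of a resolution of the germ $\gamma$ by blow-ups at points, the base case $\nu_0(\gamma)=1$ being exactly Lemma~\ref{lemainvariante}.

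For the inductive step, blow up the origin and let $\widetilde{X}\in\Pf$ be the strict transform of $X$, which is again diagonal and strongly non-resonant in each chart by the previous lemma. Let $\widetilde{\gamma}$ denote the strict transform of $\gamma$; it is an irreducible invariant curve of $\widetilde{X}$ not contained in the exceptional divisor $D$, meeting $D$ at the single point $p$ determined by the tangent direction of $\gamma$ at $0$. The key observation is that $p$ must be a singularity of $\widetilde{X}$: the computation in the preceding lemma shows that $D$ is $\widetilde{X}$-invariant, so if $\widetilde{X}(p)\neq 0$ the one-dimensional foliation would be regular at $p$ and its unique leaf through $p$ --- which contains $\widetilde{\gamma}$ --- would be forced to lie inside $D$, contradicting $\widetilde{\gamma}\not\subset D$. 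Hence $p\in\sing(\widetilde{X})$; since the eigenvalues of $X$ are pairwise distinct, these singularities are exactly the three coordinate points of the exceptional $\pn{2}$, so $p$ is the origin of one of the diagonalizing charts, say the one with $\widetilde{X}=\alpha_1 x_1^{\ast}\,\partial/\partial x_1^{\ast}+(\alpha_2-\alpha_1)x_2^{\ast}\,\partial/\partial x_2^{\ast}+(\alpha_3-\alpha_1)x_3^{\ast}\,\partial/\partial x_3^{\ast}$.

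Now $\widetilde{\gamma}$ is a separatrix of $\widetilde{X}\in\Pf$ admitting a strictly shorter resolution, so by the induction hypothesis it is one of the three coordinate axes in the diagonal form of $\widetilde{X}$. As $\widetilde{\gamma}\not\subset D=\{x_1^{\ast}=0\}$, it cannot be one of the two axes lying in $D$, hence it is the $x_1^{\ast}$-axis, which blows down to the $x_1$-axis. Therefore $\gamma$ coincides with a coordinate axis of $X$; in particular $\gamma$ is smooth. This shows that no singular separatrix can occur, and together with Lemma~\ref{lemainvariante} it yields that the three coordinate axes are the only formal separatrices of $X$.

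The main obstacle is the inductive bookkeeping: one must guarantee that every center used in the resolution is an invariant point --- so that the previous lemma keeps the transforms inside $\Pf$ --- and that the contact point of each strict transform with the corresponding exceptional divisor is a singularity of the transformed field. Both reduce to the invariance of the successive divisors, together with the fact that away from the divisor the strict transform is smooth, so that its only possible singular point is its contact with the (invariant) divisor.
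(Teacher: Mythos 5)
Your proof is correct and follows essentially the same route as the paper's: resolve the singular separatrix by punctual blow-ups, use that such blow-ups keep the vector field in $\Pf$, and reduce to the smooth case settled by Lemma~\ref{lemainvariante}. The only difference is organizational --- you proceed one blow-up at a time by induction, carefully verifying that the contact point with the divisor is a corner singularity, whereas the paper performs the full desingularization at once and invokes the lemma at the end; your version makes explicit some bookkeeping the paper leaves implicit, but the underlying argument is the same.
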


\begin{proof}
Let $\gamma$ be a formal separatrix  for $X \in \Pf$. If $\gamma$ is smooth, this has been proved in  Lemma \ref{lemainvariante}. If $\gamma$ is singular, we desingularize it through a sequence of punctual blow-ups. By the previous lemma,   the strict transform of the foliation induced by $X$ has local models in $\Pf$. Its smooth invariant curves are either in the desingularization divisor or are contained in the strict transforms of the coordinate axis in diagonalizing coordinates for $X$.  The transform of $ \gamma $ is evidently not  in the desingularization divisor.  This means that
$\gamma$ is smooth contained in one of the coordinate axes, which is not our case.
\end{proof}

Our objective now is to prove the following result:
\begin{prop}
\label{proposicaoch}
Let $\omega $ be a germ of integrable holomorphic $1-$form at $ (\mathbb{C}^ 3,0)$ with
$\codim\, \sing(\omega) \geq 2$. Suppose that $\omega$ is invariant by a  vector field in $X \in \Pf$. Then, in formal diagonalizing coordinates for $X$ and up to multiplication by a unit in $ \hat{\mathcal{O}}_3 $, we have either
\[
\omega=x_1x_2\left(b_1\dfrac{dx_1}{x_1}+b_2\dfrac{dx_2}{x_2}\right) \tag{I}
\]
or
\[\omega=x_1x_2x_3\left( b_1\dfrac{dx_1}{x_1}+b_2\dfrac{dx_2}{x_2}+b_3\dfrac{dx_3}{x_3}\right),\tag{II}\]
where $b_1,b_2,b_3\in\mathbb{C}^{\ast}$.
\end{prop}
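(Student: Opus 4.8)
The plan is to work throughout in the formal diagonalizing coordinates of \eqref{eq-inv}, so that $X=\sum_{i=1}^{3}\alpha_i x_i\,\partial/\partial x_i$ with the $\alpha_i$ pairwise distinct and strongly non-resonant, and to use the two standing hypotheses — integrability of $\omega$ and $\codim\sing(\omega)\geq 2$ — to reduce $\omega$, after multiplication by a unit, to a single eigenform of the Lie derivative $L_X$, which will automatically be a logarithmic form of type~(I) or~(II).

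First I would show that tangency and integrability together force $L_X\omega$ to be collinear with $\omega$. By Cartan's formula and $i_X\omega=0$ we have $L_X\omega=i_X\,d\omega$. Contracting the integrability identity with $X$ and using the Leibniz rule for $i_X$ gives
\[
0=i_X(\omega\wedge d\omega)=(i_X\omega)\,d\omega-\omega\wedge i_X\,d\omega=-\,\omega\wedge L_X\omega .
\]
Hence $L_X\omega\wedge\omega=0$, and since $\codim\sing(\omega)\geq 2$, the de Rham division lemma (valid also for formal germs) produces a function $g\in\hat{\mathcal{O}}_3$ with $L_X\omega=g\,\omega$. I expect this first paragraph to be the main obstacle, since it is where integrability, the tangency relation and the codimension hypothesis must be combined and where the division lemma intervenes; the rest is essentially formal manipulation governed by strong non-resonance.

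The key device is then to absorb $g$ into a unit. Strong non-resonance gives $\langle\beta,\alpha\rangle=\beta_1\alpha_1+\beta_2\alpha_2+\beta_3\alpha_3\neq 0$ for every $\beta\in\Z_{\geq 0}^{3}\setminus\{0\}$, so $X$ acts invertibly on the maximal ideal $\hat{\mathfrak{m}}\subset\hat{\mathcal{O}}_3$. Writing $g_0=g(0)$, I solve $X(v)=g-g_0$ for the unique $v\in\hat{\mathfrak{m}}$ and set $u=\exp(v)$, a unit with $X\log u=g-g_0$. A direct computation then shows
\[
L_X\tilde\omega=\bigl(g-X\log u\bigr)\tilde\omega=g_0\,\tilde\omega,\qquad \tilde\omega:=u^{-1}\omega,
\]
so $\tilde\omega$ is an eigenvector of $L_X$ with \emph{constant} eigenvalue $g_0$; moreover $i_X\tilde\omega=u^{-1}i_X\omega=0$ and $\sing(\tilde\omega)=\sing(\omega)$, so tangency and the codimension hypothesis survive. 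I would then decompose $\tilde\omega$ in the basis of monomial $1$-forms, which are precisely the eigenvectors of $L_X$: one computes $L_X\bigl(x^{\beta}dx_i\bigr)=\langle\beta+e_i,\alpha\rangle\,x^{\beta}dx_i$. Because $\gamma\mapsto\langle\gamma,\alpha\rangle$ is injective on $\Z^3$ (a coincidence of values would be a strong resonance), distinct weights give distinct eigenvalues, and $L_X\tilde\omega=g_0\tilde\omega$ forces $\tilde\omega$ to be supported on a single weight $\gamma_0$ with $\langle\gamma_0,\alpha\rangle=g_0$, namely
\[
\tilde\omega=\sum_{i:\,(\gamma_0)_i\geq 1}c_i\,x^{\gamma_0-e_i}\,dx_i
=x^{\gamma_0}\sum_{i\in S}c_i\,\frac{dx_i}{x_i},\qquad S=\{i:c_i\neq 0\}.
\]
The tangency $i_X\tilde\omega=0$ reads $\sum_{i\in S}\alpha_i c_i=0$, which excludes $|S|=1$, so $|S|\in\{2,3\}$.

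Finally I would invoke $\codim\sing(\tilde\omega)\geq 2$, i.e.\ the absence of a common factor among the coefficients of $\tilde\omega$. The monomials $x^{\gamma_0-e_i}$ with $i\in S$ share the factor $x^{\gamma_0-\mathbf{1}_S}$, and they share an additional factor $x_j$ whenever $j\notin S$ has $(\gamma_0)_j\geq 1$; the no-common-factor condition therefore forces $\gamma_0=\mathbf{1}_S$. For $|S|=2$ this yields form~(I) and for $|S|=3$ form~(II), with residues $b_i=c_i\in\C^{*}$. Undoing the normalization, $\omega=u\,\tilde\omega$ is a unit times the stated normal form, which is exactly the assertion of the proposition.
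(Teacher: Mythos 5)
Your proof is correct, and it reaches the normal forms (I) and (II) by a genuinely different route from the paper's. The paper never forms the Lie derivative: it differentiates the tangency identity $i_X\omega=0$ with respect to each variable, combines the resulting identities with the integrability relation coefficient by coefficient, and proves (the assertion attributed to Fern\'andez-Duque) that each ratio $a/b$, $a/c$, $b/c$ of coefficients of $\omega$ is an eigenfunction of $X$; from this it deduces $X(a)=R_1a$, $X(b)=R_2b$, $X(c)=R_3c$ with $R_i=\lambda_i+f$ and the same non-unit part $f$, obtains monomiality of each coefficient through a divisibility induction (the initial form $a_{\nu}$ is a single monomial and divides $a$), pins down the exponents by tangency and the no-common-factor hypothesis, and finally kills the residual unit factors by showing the ratios $b/a$, $c/a$ are themselves monomials --- with a separate case when one coefficient vanishes, which yields (I). Your argument packages the same initial information as $L_X\omega\wedge\omega=0$ (Cartan's formula applied to $i_X(\omega\wedge d\omega)=0$), uses division --- legitimate here, since $\codim\,\sing(\omega)\geq 2$ means the coefficients have no common factor in the UFD $\hat{\mathcal{O}}_3$ --- to write $L_X\omega=g\,\omega$, and then performs a step the paper does not have: solving the homological equation $X(v)=g-g(0)$, which is possible precisely because strong non-resonance makes $X$ invertible on the maximal ideal, so that the unit multiple $e^{-v}\omega$ is an eigenform of $L_X$ with \emph{constant} eigenvalue. (This is consistent with the paper's data: there $L_X\omega=(\lambda_1+\alpha_1+f)\,\omega$, and your $v$ absorbs exactly the non-constant part $f$.) After that, the diagonal action of $L_X$ on monomial $1$-forms, together with the injectivity of $\gamma\mapsto\gamma_1\alpha_1+\gamma_2\alpha_2+\gamma_3\alpha_3$ on $\Z^3$ (which is exactly strong non-resonance), gives monomiality in one stroke, and tangency plus the no-common-factor condition fix the exponents as in the paper. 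What your route buys: cases (I) and (II) come out uniformly with no separate treatment of a vanishing coefficient, the unit is produced explicitly once and for all rather than extracted in stages, and each hypothesis enters at a clearly identified point. What the paper's route buys: it is entirely elementary --- power-series manipulation only, with no Lie-derivative calculus, no division lemma, and no exponential of a formal solution --- and the intermediate eigenfunction relations for the coefficient ratios are of independent interest.
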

\begin{proof}
Fix $(x_{1},x_{2},x_{3})$ formal diagonalizing coordinates for $X$ as in \eqref{eq-inv} and write
\begin{equation}
\label{formaomega}\omega=adx_1+bdx_2+cdx_3,
\end{equation}
where $a,b,c\in \hat{\mathcal{O}}_3$ are without common factors.   Since $X$ is tangent to $\omega$, the contraction of $\omega$ by $X$ gives
\begin{equation}
0=i_X\omega=\alpha_{1}x_1a+\alpha_{2}x_2b+\alpha_{3}x_3c.
\label{continhas}
\end{equation}
The integrability condition in its turn reads
\begin{equation}
0 = \omega\wedge d\omega = a(c_{x_2}-b_{x_3})+b(-c_{x_1}+a_{x_3})+c(b_{x_1}-a_{x_2}).
\label{integrabilidade}
\end{equation}
The differentiation of  (\ref{continhas}) with respect to each of the variables $ x_1, x_2 $ and $ x_3 $  produces the following set of equations:
\begin{eqnarray}
\alpha_{1}a+\alpha_{1}x_1a_{x_1}+\alpha_{2}x_2b_{x_1}+\alpha_{3}x_3c_{x_1}=0;    \\
\label{c1}
\alpha_{1}x_1a_{x_2}+\alpha_{2}b+\alpha_{2}x_2b_{x_2}+\alpha_{3}x_3c_{x_2}=0 ; \medskip \\
\label{c2}
\alpha_{1}x_1a_{x_3}+\alpha_{2}x_2b_{x_3}+\alpha_{3}x_3c_{x_3}+\alpha_{3}c=0. \medskip
\label{c3}
\end{eqnarray}
We have the following (this was shown to us by M. Fernández-Duque):
\begin{aff}
In the above conditions, $X$ leaves invariant each ratio of coefficients of $\omega$.
\end{aff}
\noindent \emph{Proof of the Assertion.}
In fact,
\begin{eqnarray}
b^2X\left(a/b\right)&=& bX(a)-aX(b)\nonumber\\
&=& b(\alpha_{1} x_1a_{x_1}+\alpha_{2} x_{2}a_{x_2}+\alpha_{3}x_3 a_{x_3})-a(\alpha_{1} x_1b_{x_1}+\alpha_{2} x_2b_{x_2}+\alpha_{3} x_3 b_{x_3})\nonumber  \\
&=& b(-\alpha_{1} a-\alpha_{2} x_2b_{x_1}-\alpha_{3} x_3c_{x_1})+b\alpha_{2} x_2a_{x_2}+b\alpha_{3} x_3a_{x_3}-a\alpha_{1} x_1b_{x_1}\nonumber\\ &&\ \ \ \ \   -a\alpha_{2} x_2b_{x_2}-a\alpha_{3} x_3b_{x_3} \ \ \   (\text{by \eqref{c1}} ) \nonumber\\
&=& b\alpha_{3} x_3(a_{x_3}-c_{x_1})-ab\alpha_{1} -b\alpha_{2} x_2b_{x_1}+b\alpha_{2}x_2a_{x_2} -a\alpha_{1} x_1b_{x_1}\nonumber\\ && \ \ \ \ \   -a\alpha_{2} x_2b_{x_2}-a\alpha_{3} x_3b_{x_3}\nonumber\\
&=& b\alpha_{3} x_3(a_{x_3}-c_{x_1})+a\alpha_{3}x_3(c_{x_2}-b_{x_3})+ab(\alpha_{2}-\alpha_{1}) \nonumber\\ &&\ \ \ \ \ + b\alpha_{2}x_2(a_{x_2}-b_{x_1})+a\alpha_{1} x_1(a_{x_2}-b_{x_1})\nonumber \ \ \ (\text{by \eqref{c2}}) \\
&=& \alpha_{3}x_3(c(a_{x_2}-b_{x_1}))+ab(\alpha_{2}-\alpha_{1})+(a_{x_2}-b_{x_1})(a\alpha_{1} x_1+b\alpha_{2} x_2) \ \ (\text{by  \eqref{integrabilidade}}) \nonumber\\
&=& ab(\alpha_{2}-\alpha_{1})+(a_{x_2}-b_{x_1})(\alpha_{1} ax_1+\alpha_{2} bx_2+\alpha_{3} cx_3)\ \ (\text{by \eqref{continhas}}) \nonumber\\ &=& ab(\alpha_{2}-\alpha_{1}). \nonumber
\end{eqnarray}
That is, $X\left(a/b\right)=(\alpha_{2}-\alpha_{1})a/b$. In a similar way, we find $X\left(a/c\right)=(\alpha_{3}-\alpha_{1})a/c$ and $X\left(b/c\right)=(\alpha_{3}-\alpha_{2})b/c$,
proving the assertion. \qed

\medskip
We have just found that
\begin{equation*}
X(a/b)=\mu_1a/b,  \qquad X(a/c)= \mu_2a/c \qquad \text{and} \qquad X(b/c)= \mu_3b/c,
\end{equation*}
where $\mu_{1} = \alpha_{2}-\alpha_{1}$, $\mu_{2} = \alpha_{3}-\alpha_{1}$ and $\mu_{3} = \alpha_{3}-\alpha_{2}$ are in $\C^{*}$.
These equations can be rewritten as
\begin{equation}
bX(a)-aX(b)=\mu_1ab ,  \ \ \ cX(a)-aX(c)=\mu_2ac \ \ \ \text{and} \ \ \
cX(b)-bX(c)=\mu_3bc.
\label{ultimahora}
\end{equation}
The first of these equations is equivalent to $bX(a)=a(X(b)+\mu_1b)$, where we can see that the factors of $a$ that do not divide $b$ do divide $X(a)$.
Similarly, from second equation  we have   $cX(a)=a(X(c)+\mu_2c)$, allowing us to conclude that the factors of $a$ that do not divide  $c$ do divide $X(a)$. Since $a$, $b$ and $c$ do not have common factors we conclude that $a$ divides $X(a)$. Analogously, $b$ divides $X(b)$ and $c$ divides $X(c)$.
Therefore, we can find functions $R_1,R_2,R_3\in \hat{\mathcal{O}}_3$ such that
\[   X(a)=R_1a, \qquad
X(b)=R_2b \qquad  \text{and} \qquad
X(c)=R_3c.\]
 From equations \eqref{ultimahora}  we have
\[
R_1-R_2=\mu_1, \qquad
R_1-R_3=\mu_2 \qquad \text{and} \qquad
R_2-R_3=\mu_3.
\]
For $i=1,2,3$, we write $ R_i=(\lambda_i+f_i)$, where $\lambda_i\in\mathbb{C}$ and $f_i\in \hat{\mathcal{O}}_3$ is  a non-unity. From the above equations,
we have
\[ \lambda_1-\lambda_2=\mu_1=\alpha_{2}-\alpha_{1},  \qquad
\lambda_1-\lambda_3=\mu_2=\alpha_{3}-\alpha_{1}, \qquad
\lambda_2-\lambda_3=\mu_3=\alpha_{3}-\alpha_{2}
\]
and $f_1=f_2=f_3=f$.

Suppose that $a\neq 0$ and denote by $a_{\nu}$ be its initial part, that is, the homogeneous part of order $\nu = \nu_{0}(a)$ of its Taylor series. Taking   initial parts in both sides of
$X(a)=(\lambda_1+f)a$
and considering the fact that the derivation by $X$ preserves the  degree ---    actually,  the multidegree --- of each monomial, we have \[X(a_{\nu})=\lambda_1a_{\nu}.\]
Further, if $\kappa = b_1x_1^ix_2^jx_3^k$ is a non-zero monomial in  $a_{\nu}$, where $b_{1} \in \C^{*}$, then
$X(\kappa)=\lambda_1 \kappa$, which gives
$\lambda_1=i\alpha_{1}+j\alpha_{2}+k\alpha_{3}\neq 0$.
Note that the fact that $(\alpha_{1},\alpha_{2},\alpha_{3})$ is strongly non-resonant implies that
$a_{\nu} = \kappa$.
\begin{aff}
\label{afirnova}
$a_{\nu}$ divides $a$.
\end{aff}
\noindent \emph{Proof of the Assertion.}
Write the    power series $a=\sum_{\ell\geq \nu}a_\ell$, where $a_\ell$ assembles the homogeneous terms of degree  $\ell$. We will show by induction that $a_{\nu} = b_1x_1^ix_2^jx_3^k$ divides each $a_\ell$. There is nothing to prove for $\ell= \nu$.  Let $m>\nu$ and suppose that  $a_{\ell}$ is divisible by $a_{\nu}$ for all $ \ell=\nu,\ldots, m-1$. Let $\varrho$ be  a monomial of $a_m$. We have $X(\varrho)=\lambda \varrho$ for some $\lambda \in \C$.  Considering the calculation in the above paragraph, since $\lambda_1$ has  already been determined by the multidegree of $a_{\nu}$, we must have  $\lambda\neq \lambda_1$.
On the other hand,
separating all monomials  of the same multidegree of $\varrho$ in the expression $ X(a) = \lambda_1a + fa $, we have
\begin{equation}
X(\varrho)=\lambda_1 \varrho+ \tilde{\varrho},
\label{xF}
\end{equation}
where $\tilde{\varrho}$ assembles all monomials coming from $fa$. Notice that $\tilde{\varrho}$ can be seen a combination of monomials of $a$ of order smaller than $m$ having monomials of $f$  as coefficients.
Hence, by the induction hypothesis, $\tilde{\varrho}$ is divisible by $a_{\nu}$.
Rewriting \eqref{xF} as $\lambda \varrho=\lambda_1 \varrho+\tilde{\varrho}$, we find $(\lambda-\lambda_1)\varrho=\tilde{\varrho}$, from where we deduce that  $\varrho$ is also
divisible by $a_{\nu}$. We then conclude that $a_{\nu}$ divides  $a_m$, proving the general step of the induction.
\qed

\medskip

Suppose that the coefficients $ a, b $, and $ c $ of $\omega$ are non-zero. By Assertion \ref{afirnova}, we can  write
\[
a=x_1^{i_1}x_2^{j_1}x_3^{k_1}(b_1+g_1), \ \  \
b=x_1^{i_2}x_2^{j_2}x_3^{k_2}(b_2+g_2) \ \ \ \text{and} \ \ \
c=x_1^{i_3}x_2^{j3}x_3^{k_3}(b_3+g_3),
\]
where $b_1,b_2,b_3\in \mathbb{C}^{\ast}$ and $g_1,g_2,g_3\in \hat{\mathcal{O}}_3$ are non-units. Since $X$ is tangent to $\omega$, we have
\[
\alpha_{1}b_1x_1^{i_1+1}x_2^{j_1}x_3^{k_1}+\alpha_{2}b_2x_1^{i_2}x_2^{j_2+1}x_3^{k_2}+
\alpha_{3}b_3x_1^{i_3}x_2^{j_3}x_3^{k_3+1}=0,
\]
which implies that
\[
i_1+1=i_2=i_3, \qquad
j_1=j_2+1=j_3 \qquad \text{and} \qquad
k_1=k_2=k_3+1.
\]
Since $a,b,c$ do not have common factors,  we find straight  that $i_1=j_2=k_3=0$, giving
\[
i_2=i_3=1, \qquad
j_1=j_3=1 \qquad \text{and} \qquad
k_1=k_2=1.
\]
Now we can write
\begin{eqnarray}
\omega &=& x_2x_3(b_1+g_1)dx_1+x_1x_3(b_2+g_2)dx_2+x_1x_2(b_3+g_3)dx_3\nonumber\\\medskip
&=&x_1x_2x_3\left((b_1+g_1)\dfrac{dx_1}{x_1}+(b_2+g_2)\dfrac{dx_2}{x_2}+(b_3+g_3)\dfrac{dx_3}{x_3}\right).
\label{novissima4}
\end{eqnarray}
Dividing equation  \eqref{novissima4}  by $1+g_1/b_1$, we can rewrite, abusing, notation, \begin{equation}
\omega=b_1x_2x_3dx_1+x_1x_3(b_2+g_2)dx_2+x_1x_2(b_3+g_3)dx_3.
\end{equation}
Let us apply the relation $X(b/a)=-\mu_1b/a$ of \eqref{ultimahora}  to this writing of $\omega$. Write
\[
\dfrac{b}{a} = \dfrac{x_1}{b_1x_2}(b_2+g_2)  =  \sum_{ i,k\geq 0,\, j\geq -1} \alpha_{ijk}x_1^ix_2^jx_3^k
\]
as a sum of meromorphic monomials.
Since the derivation by $X$ preserves monomials, we have that
 $\alpha_1 i  + \alpha_2 j + \alpha_3 k = - \mu_{1}$ whenever $\alpha_{ijk} \neq 0$. Again, the fact that $(\alpha_1 ,\alpha_2 ,\alpha_3)$ is free from strong resonances implies that $b/a$ is a monomial.    Thus $g_2=0$ and
\[
\dfrac{b}{a}=\dfrac{b_2x_1}{b_1x_2},
\]
implying $b=b_2x_1x_3$.
In an analogous way, we can also prove that $c=b_3x_1x_2$. This leads to the form (II) in the
statement of the proposition. 
The case where one of the  coefficients of $\omega$ is zero, for example, $c=0$, is treated following
 the
 same steps above, giving form (I) in the
statement.
\end{proof}

\begin{prop}
\label{propprausarnolema}
Let $\omega$ be a germ of integrable holomorphic $1-$form at $(\mathbb{C}^3,0)$ invariant by a vector field  in $X\in \Pf$. Then $\omega$ is complex hyperbolic.
\end{prop}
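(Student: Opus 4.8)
The plan is to invoke Proposition \ref{proposicaoch} to reduce the statement to two explicit formal models and then to verify directly that each of them is complex hyperbolic. Since, by the standing convention of Section \ref{section-preliminaries}, the coefficients of $\omega$ are taken without common factor, the set $\sing(\omega)$ automatically has codimension at least two, so the hypothesis of Proposition \ref{proposicaoch} is met. Hence there are formal coordinates $(x_1,x_2,x_3)$ diagonalizing $X$ in which, up to a unit of $\hat{\mathcal{O}}_3$, the form $\omega$ equals one of the closed logarithmic forms
\[
\omega_{\mathrm I}=x_1x_2\left(b_1\frac{dx_1}{x_1}+b_2\frac{dx_2}{x_2}\right),\qquad
\omega_{\mathrm{II}}=x_1x_2x_3\left(b_1\frac{dx_1}{x_1}+b_2\frac{dx_2}{x_2}+b_3\frac{dx_3}{x_3}\right),
\]
with all residues $b_i\in\C^{*}$. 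The argument then rests on the fact that complex hyperbolicity is invariant under formal equivalence: it is detected by the reduction of singularities, which makes sense formally, and whether a final model is simple complex hyperbolic or a saddle-node is determined by its formal class (a saddle-node being distinguished by a vanishing residue). Thus it suffices to prove that $\omega_{\mathrm I}$ and $\omega_{\mathrm{II}}$ define complex hyperbolic foliations.

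For this I would use the transversal characterization of complex hyperbolicity recalled in Section \ref{section-preliminaries}. Let $\phi\colon(\C^2,0)\to(\C^3,0)$ be a generic transversal. Then $\phi^{*}\G$ is defined by the closed logarithmic $1$-form $\sum_i b_i\, d(x_i\circ\phi)/(x_i\circ\phi)$ on $(\C^2,0)$, whose polar set is, for generic $\phi$, a union of smooth branches in general position and whose residues $b_i$ are all nonzero. Such a planar logarithmic foliation has no saddle-node in its reduction of singularities: every singularity lying on an invariant component of the reduction divisor carries a nonzero residue, hence a nonzero eigenvalue, so it is non-degenerate; the exceptional components created with vanishing residue are non-invariant (dicritical) and do not produce saddle-nodes. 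Consequently $\phi^{*}\G$ is of generalized curve type, and the same reasoning applies verbatim to the dimensional type two form $\omega_{\mathrm I}$. By the transversal characterization, both forms are complex hyperbolic. One may note in passing that for $\omega_{\mathrm I}$ the tangency relation $\alpha_1 b_1+\alpha_2 b_2=0$, together with the strong non-resonance of $(\alpha_1,\alpha_2,\alpha_3)$, already forces $b_1,b_2$ to be non-resonant, so $\omega_{\mathrm I}$ is in fact a simple complex hyperbolic singularity of dimensional type two.

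The main obstacle is the bookkeeping of the formal-to-analytic passage. Since Proposition \ref{proposicaoch} yields only a formal model, one must ensure the transversal characterization survives the formal conjugacy: for an analytic transversal $\phi$ of $\G$, the pullback $\phi^{*}\omega$ is merely formally equivalent to $(\hat\phi\circ\phi)^{*}$ of the explicit form, so the conclusion that $\phi^{*}\omega$ is of generalized curve type relies on the generalized curve property being itself a formal invariant in dimension two --- for instance because it is equivalent to the reduction of the separatrix set resolving the foliation, a condition read off the formal model, cf. \cite{camacho1984}. A secondary subtlety, which the transversal route is designed to avoid, is that a direct three-dimensional reduction of $\omega_{\mathrm{II}}$ in the resonant case $b_1+b_2+b_3=0$ produces a dicritical exceptional divisor, and verifying by hand that no saddle-node appears there is precisely what passing to a generic plane transversal circumvents.
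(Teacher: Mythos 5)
Your proof is correct, and its first step coincides with the paper's: both invoke Proposition \ref{proposicaoch} to put $\omega$, in formal coordinates diagonalizing $X$ and up to a formal unit, into one of the logarithmic models (I) or (II). From there the routes diverge. The paper stays in ambient dimension three: model (I), and model (II) when the residues are non-resonant, are simple complex hyperbolic singularities by definition (your remark that the tangency relation $\alpha_1b_1+\alpha_2b_2=0$ together with strong non-resonance of the $\alpha_i$ forces non-resonance of $(b_1,b_2)$ is precisely why the paper calls case (I) immediate); in the resonant case of (II), the paper cites \cite{cano2004,fernandez-duque2015} for the fact that resonances among nonzero residues of a linear logarithmic form can be eliminated by punctual and monoidal blow-ups, producing a complex hyperbolic reduction of singularities. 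You instead descend to dimension two via the transversal characterization of complex hyperbolicity, reducing everything to the classical fact that a planar logarithmic foliation with nonzero residues is a generalized curve. The trade-off: the paper's route yields an explicit three-dimensional reduction but leans on the cited resonance-elimination results; yours avoids that machinery but must justify that the generalized curve property is a formal invariant, since the normal form is only formal --- a subtlety you correctly flag, and which the paper's own proof silently glosses over, as it too applies the definition of simple singularities to a formal model. One spot to tighten in your sketch: the phrase ``nonzero residue, hence a nonzero eigenvalue, so it is non-degenerate'' is too quick, because one nonzero eigenvalue does not exclude a saddle-node; the correct argument is that singularities of a closed $1$-form with simple poles sit at crossings of two polar branches (giving two nonzero eigenvalues $b_i$ and $-b_j$), while away from the polar set the form is exact, so the foliation there has a local holomorphic first integral, which a saddle-node never admits.
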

\begin{proof}
We apply  Proposition \ref{proposicaoch}.  If the dimensional type is two, it is straight to see that   $\omega$ is simple complex hyperbolic. If the dimensional type is three, then, except for a
possible resonance of its  residues, $\omega$
has the form of a simple complex hyperbolic singularity.
However,  these resonances  can be eliminated by punctual or monoidal blow-ups \cite{cano2004,fernandez-duque2015}, obtaining simple singularities of   complex hyperbolic type. We then conclude that $\omega$ is complex hyperbolic.
\end{proof}

Next result exemplifies how  vector fields and codimension one foliations satisfying
  a relation of tangency   can be geometrically entwined.
 Before stating it, we set a definition:
  a germ  holomorphic vector field --- or its associated one-dimensional holomorphic
   foliation --- at  $(\mathbb{C}^3,0)$ has an
 \emph{absolutely isolated} singularity at $0 \in \C^{3}$ if it
 admits a reduction of singularities having only  punctual blow-ups.
 We call the corresponding composition of blow-up maps
 an \emph{absolutely isolated} reduction of singularities.

\begin{teo}
\label{teo-sp-ch}
Let $\F$ be a germ of
one-dimensional holomorphic foliation  at  $(\mathbb{C}^3,0)$ admitting
a non-dicritical absolutely isolated reduction of singularities
whose associated final models
are all strongly diagonalizable.
If $\mathcal{G}$ is a germ of foliation of codimension   one   invariant by
$\mathcal{F}$, then $\mathcal{G}$ is of complex hyperbolic type.
\end{teo}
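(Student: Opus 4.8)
The plan is to use the given reduction of singularities of $\F$ as the scaffolding for a reduction of $\G$, and then to verify that no saddle-nodes ever appear, so that every final model of $\G$ is simple complex hyperbolic. Let $X$ induce $\F$ and let $\omega$ (integrable, coefficients without common factors, so that $\codim\sing(\omega)\geq 2$) induce $\G$, with $i_X\omega=0$. Let $\pi$ be the prescribed non-dicritical absolutely isolated reduction of singularities of $\F$, a composition of punctual blow-ups with exceptional divisor $D=\pi^{-1}(0)$, which is compact and invariant by the strict transform $\widetilde{\F}$. Since points are always admissible centers for a codimension-one foliation, the very same sequence $\pi$ is a legitimate sequence of blow-ups for $\G$; let $\widetilde{\G}$ denote the transformed foliation, and recall that tangency is preserved, so $\widetilde{\F}$ is tangent to $\widetilde{\G}$ along $D$. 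The singular set $\sing(\widetilde{\F})=\{p_1,\dots,p_r\}$ is finite and consists of strongly diagonalizable points. Because $\pi$ reduces $\F$ but a priori not $\G$, it suffices to show that $\widetilde{\G}$ admits, near each of its singular points along $D$, a reduction whose final models are simple complex hyperbolic: composing $\pi$ with these local reductions then yields a complex hyperbolic reduction of $\G$.

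At each $p_i$ the foliation $\widetilde{\F}$ is induced by a vector field in $\Pf$, and $\widetilde{\G}$ is locally an integrable holomorphic $1$-form (with $\codim\sing\geq 2$, the exceptional factors having been removed in the strict transform) invariant by it. By Proposition \ref{propprausarnolema}, the germ of $\widetilde{\G}$ at $p_i$ is complex hyperbolic; indeed, Proposition \ref{proposicaoch} puts it in the normal form (I) or (II) with residues in $\C^{*}$, so any local reduction there is free of saddle-nodes. This disposes of the corners of $D$ and of all zero-dimensional singularities of $\widetilde{\F}$.

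It remains to treat the points of $\sing(\widetilde{\G})$ where $\widetilde{\F}$ is regular. Since $i_X\omega=0$ forces $\sing(\omega)$ to be $X$-invariant, the one-dimensional components of $\sing(\widetilde{\G})$ are separatrices of $\widetilde{\F}$. At a regular point $q$ of $\widetilde{\F}$ on such a separatrix $\Gamma$, choose a flow box with $\widetilde{X}=\partial/\partial x_1$; writing the local form as $\widetilde{\omega}=B\,dx_2+C\,dx_3$, the tangency $i_{\widetilde{X}}\widetilde{\omega}=0$ is automatic and integrability reduces to $C\,B_{x_1}=B\,C_{x_1}$, so the ratio $B/C$ is independent of $x_1$. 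Hence, up to a unit, $\widetilde{\G}$ is the suspension (pull-back) of a germ of foliation $\G_0$ on a transversal $(\C^2,0)$. A dimensional-type-two foliation of this form is complex hyperbolic exactly when $\G_0$ is of generalized curve type, its reduction being the product of a reduction of $\G_0$ with a line; thus I only need $\G_0$ generalized curve. The analytic type of $\G_0$ is invariant under the holonomy of $\widetilde{\F}$, hence constant along $\Gamma$. Sliding $q$ along $\Gamma$ toward a final model $p_i$ in its closure, the germ $\G_0$ becomes conjugate to the restriction of the normal form (I)/(II) to a section $\{x_j=\mathrm{const}\}$, which is $b_k\,\frac{dx_k}{x_k}+b_l\,\frac{dx_l}{x_l}$ with $b_k,b_l\in\C^{*}$. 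Having nonvanishing residues, this two-dimensional singularity has no saddle-node in its reduction and is of generalized curve type; therefore $\G_0$, and with it $\widetilde{\G}$ at $q$, is complex hyperbolic.

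Putting the pieces together, $\widetilde{\G}$ is complex hyperbolic at every singular point along $D$, and composing $\pi$ with the local complex hyperbolic reductions at the finitely many non-simple points produces a reduction of singularities of $\G$ all of whose final models are simple complex hyperbolic; that is, $\G$ is complex hyperbolic. The main obstacle is the control of the regular-$\widetilde{\F}$ locus in the third step: one must guarantee that every one-dimensional component of $\sing(\widetilde{\G})$ is a separatrix of $\widetilde{\F}$ whose closure actually reaches a final model $p_i$, so that the generalized curve type can be anchored there and propagated by holonomy. This is where the hypotheses are essential — the non-dicritical, absolutely isolated structure of the reduction, the compactness of $D$, and the $\widetilde{\F}$-invariance of $\sing(\widetilde{\G})$ — and it is the part of the argument that requires the most care; the identification of suspensions of generalized curve foliations with the complex hyperbolic foliations of dimensional type two is the remaining technical point, but it is essentially the two-dimensional content already recorded in the definitions.
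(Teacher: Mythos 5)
Your overall scaffolding (use the reduction of $\F$, settle the singular points of the transformed $\F$ via Proposition \ref{propprausarnolema}, and propagate complex hyperbolicity along components of $\sing(\widetilde{\G})$ to the points where $\widetilde{\F}$ is regular) is in the right spirit, but the step you yourself flag at the end is a genuine gap, not a technicality, and your ``all at once'' set-up makes it unfillable by the natural tool. Concretely: you need that the closure of every one-dimensional component $\Gamma$ of $\sing(\widetilde{\G})$ contains a singularity of $\widetilde{\F}$, so that the transversal type of $\widetilde{\G}$ along $\Gamma$ can be anchored at a strongly diagonalizable final model. The only mechanism available to force such a singularity is the Camacho--Sad index theorem: the sum of the indices of the restricted foliation along an invariant compact curve equals the self-intersection number $\Gamma\cdot\Gamma$, so a singularity is guaranteed only when $\Gamma\cdot\Gamma\neq 0$. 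But after performing the \emph{entire} composition of punctual blow-ups, the components of the divisor $D$ are no longer $\mathbb{P}^{2}$; they are $\mathbb{P}^{2}$ blown up at finitely many points, and such surfaces carry compact curves of self-intersection zero (e.g.\ the strict transform of a line through a blown-up point) or negative. Hence nothing prevents, a priori, a component of $\sing(\widetilde{\G})$ from being an $\widetilde{\F}$-invariant compact curve avoiding $\sing(\widetilde{\F})$ altogether, and your sliding/holonomy argument then has nothing to anchor to. (You also omit the degenerate case where $\widetilde{\G}$ has no singularities over $D$ at all, which cannot be dismissed as vacuous.)

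The paper closes exactly this gap by a different organization of the same ingredients: induction on the minimal length $n$ of the reduction, performing \emph{one} blow-up $\pi:(M,E)\to(\C^{3},0)$ at a time. Then $E\simeq\mathbb{P}^{2}$, where \emph{every} algebraic curve satisfies $\gamma\cdot\gamma>0$, so Camacho--Sad does produce a singularity $q$ of $\mathcal{F}_{1}|_{E}$ on any component $\gamma\subset\sing(\mathcal{G}_{1})$ through a regular point of $\mathcal{F}_{1}$; at $q$ the induction hypothesis (the germ of $\mathcal{F}_{1}$ there has a strictly shorter reduction) gives complex hyperbolicity of $\mathcal{G}_{1}$, which then propagates along $\gamma$ via the transversal model --- the deeper levels of the blow-up tower are never traversed directly, so the bad intersection theory of the later divisor components never enters. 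The remaining case, $\sing(\mathcal{G}_{1})\cap E=\emptyset$, is handled by Malgrange's theorem: then $\sing(\G)$ is isolated, $\G$ has a holomorphic first integral and is complex hyperbolic. To repair your proof you would essentially have to restructure it into this induction; alternatively you would need an independent argument excluding $\widetilde{\F}$-invariant components of $\sing(\widetilde{\G})$ with vanishing self-intersection, which you do not provide.
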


\begin{proof}
The proof goes by induction on  $n$, the minimal length of all
 absolutely isolated reductions of singularities for  $\mathcal{F} $ as in the theorem's assertion.
If $n=0$ the result follows from Proposition \ref{propprausarnolema}.
Suppose then that  $n>0$  and that the   result is true for one-dimensional foliations having non-dicritical absolutely isolated
reductions of singularities with strongly diangonal final models of length less than $n$. Denote by $\pi:(M,E)\to (\mathbb{C}^3,0)$
the first punctual blow-up of the corresponding   reduction of singularities of $\mathcal{F}$.
If $\mathcal{G}_1 = \pi^{\ast}\mathcal{G}$ were non-singular over the divisor $E = \pi^{-1}(0) \simeq \mathbb{P}^2$, then $\sing(\mathcal{G})$ would be an isolated singularity
at $0 \in \C^{3}$. As a consequence of Malgrange's Theorem \cite{malgrange1976}, in this case $\cl{G}$ would     have a holomorphic first integral,   being of complex hyperbolic type.
We can then suppose that $\sing(\mathcal{G}_{1}) \cap E \neq \emptyset$ and pick
  $p \in \sing(\mathcal{G}_1) \cap E $. If $p\in \sing(\mathcal{F}_1)$, then, by the induction hypothesis, we must have that $\mathcal{G}_1$ is of
complex hyperbolic type at $p$.
Suppose then that   $p$ is regular for $\mathcal{F}_1$.  In this case, since $\mathcal{F}_1$ is tangent to $\mathcal{G}_1$, the foliation $\mathcal{G}_1$ has dimensional type two at $p$ and the leaf of $\mathcal{F}_1$ at $p$    is a curve contained in the  one-dimensional a analytic set $\sing(\mathcal{G}_1)$. Since $E$ is invariant by $\mathcal{F}_1$, the component of $\sing(\mathcal{G}_1)$ containing this leaf  is contained in $E$ and, hence, it is an algebraic curve in $E \simeq \mathbb{P}^2$, that we denote by $\gamma$.  Now, the sum of   Camacho-Sad indices of $\mathcal{F}_{1{\mid E}}$ along $\gamma$ is the self-intersection number $\gamma \cdot \gamma >0$   (see \cite{camacho1982,suwa1995}). This assures the existence of a singularity $q \in \sing(\mathcal{F}_{1{\mid E}})$,
which is obviously a singularity of  $ \mathcal{F}_1$. By the induction hypothesis, $q$ is of  complex  hyperbolic type for $\mathcal{G}_1$. In view of this, the transversal model of $ \mathcal{G}_1$ along (the generic point of) $ \gamma $ is of   complex hyperbolic type, leading to the conclusion that $\mathcal{G}_1$ is of  complex  hyperbolic  type at $p$. We have found that each singularity of $ \mathcal{G}_1$ over $E$ is of  complex hyperbolic type, admitting a complex hyperbolic reduction of singularities.
This means that $\G$ itself has a complex hyperbolic reduction of singularities, being a foliation of  complex hyperbolic type.
\end{proof}


\section{Integrable pencils of $1-$forms}
\label{section-pencil}

The goal of this section is characterize the situation in  which a germ of holomorphic vector field at $(\C^{3},0)$
is tangent to three independent holomorphic foliations. We show that, when this happens, the vector field
is tangent to infinitely many foliations and   it leaves invariant a germ of analytic surface.
To this end, we work with the notion of pencil of integrable 1-forms or pencil of foliations.
We will formulate our results in the broader context of holomorphic foliations of codimension one at $(\C^{n},0)$, $n \geq 3$, that leave invariant   foliations of codimension two.

We start with a definition. Let  $\omega_1$ and $\omega_2$  be independent
 germs of   holomorphic    $1-$forms at $(\mathbb{C}^n,0)$, that
 is, such that $\omega_1 \wedge \omega_2 \neq 0$. The \emph{pencil of $1-$forms}
with \emph{generators} $\omega_1$ and $\omega_2$ is the linear subspace $\cl{P} = \cl{P}(\omega_{1},\omega_{2})$
of the complex vector space of   germs of holomorphic $1-$forms $(\mathbb{C}^n,0)$ formed by all
 $1-$forms $\omega_{(a,b)} = a\omega_1+b\omega_2$, where
$a,b \in \C $.
We have the following lemma, which is a local version of \cite[Lem. 2]{mol2011}:

\begin{lem}
\label{lem-comp-codim2}
If the independent  germs of  holomorphic  $1-$forms $\omega_1$ and $\omega_2$  do not have  common components   of codimension one in their singular sets, then the $1-$forms  $a\omega_1+b\omega_2 \in \cl{P} = \cl{P}(\omega_{1},\omega_{2})$ have singular sets of codimension at least two,
  except possibly for a finite number of values $(a:b) \in \mathbb{P}^{1}$.
\end{lem}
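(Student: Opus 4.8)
The plan is to reduce the statement to a divisibility count in the unique factorization domain $\mathcal{O}_n$ of germs of holomorphic functions at $(\mathbb{C}^n,0)$. Writing $\omega_1 = \sum_i \alpha_i\, dx_i$ and $\omega_2 = \sum_i \beta_i\, dx_i$, the form $\omega_{(a,b)} = a\omega_1 + b\omega_2$ has coefficients $a\alpha_i + b\beta_i$, and $\sing(\omega_{(a,b)})$ acquires a component of codimension one precisely when these coefficients share a non-unit common factor. Since $\mathcal{O}_n$ is a UFD, this happens if and only if there is an irreducible $h \in \mathcal{O}_n$ with $h \mid a\alpha_i + b\beta_i$ for every $i$. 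Thus I must show that such an $h$, together with the corresponding $(a:b) \in \mathbb{P}^1$, can occur only finitely often.

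First I would constrain the possible factors $h$. Fixing two indices $i,j$ and eliminating, from $h \mid a\alpha_i + b\beta_i$ and $h \mid a\alpha_j + b\beta_j$ one obtains $h \mid a(\alpha_i\beta_j - \alpha_j\beta_i)$ and $h \mid b(\alpha_i\beta_j - \alpha_j\beta_i)$; since $(a,b) \neq (0,0)$, one of $a,b$ is a nonzero scalar, hence a unit, so $h$ divides every $2\times 2$ minor $\alpha_i\beta_j - \alpha_j\beta_i$. These minors are exactly the coefficients of $\omega_1 \wedge \omega_2$, which is nonzero by the independence hypothesis; hence their greatest common divisor $g$ is a nonzero germ and $h \mid g$. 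Because $g$ has only finitely many irreducible factors up to units, there are only finitely many possibilities for $h$.

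Then I would bound, for each admissible $h$, the number of values $(a:b)$ it can produce. Passing to the domain $\mathcal{O}_n/(h)$, the conditions $h \mid a\alpha_i + b\beta_i$ become $a\bar\alpha_i + b\bar\beta_i = 0$ for all $i$, so that $(a,b)$ lies in the kernel $K_h$ of the $\mathbb{C}$-linear map $\mathbb{C}^2 \to (\mathcal{O}_n/(h))^n$. If $\dim_{\mathbb{C}} K_h = 2$, then $h$ divides every $\alpha_i$ and every $\beta_i$, so $\{h=0\}$ is a common codimension-one component of $\sing(\omega_1)$ and $\sing(\omega_2)$, which is excluded by hypothesis. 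Hence $\dim_{\mathbb{C}} K_h \leq 1$, so $K_h$ determines at most one point of $\mathbb{P}^1$. Summing over the finitely many admissible $h$ gives finitely many values $(a:b)$ for which $\omega_{(a,b)}$ has a codimension-one singular component, which is exactly the assertion.

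I would say the only real subtlety, and the place where the hypothesis is genuinely used, is the final dichotomy: ruling out the two-dimensional kernel is precisely the content of \emph{no common codimension-one component}, while identifying the candidate factors with divisors of the coefficients of $\omega_1 \wedge \omega_2$ is what makes the count finite. Everything else is routine manipulation in the UFD $\mathcal{O}_n$.
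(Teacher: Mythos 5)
Your proof is correct, and its key computation is the same one the paper uses: eliminating between the coefficients of $a\omega_1+b\omega_2$ to show that any irreducible $h$ dividing all of them must divide the $2\times 2$ minors $\alpha_i\beta_j-\alpha_j\beta_i$, i.e.\ the coefficients of $\omega_1\wedge\omega_2$. Where you differ is in the logical packaging. The paper argues by contradiction: if infinitely many values were bad, the no-common-component hypothesis forces the associated irreducibles $g_t$ to be pairwise non-associate (distinct $t$'s cannot share a $g_t$, else $g_t$ would divide all $A_i$ and all $B_i$), so each minor is divisible by infinitely many non-associate irreducibles and hence vanishes identically, producing a meromorphic $\Phi$ with $\omega_1=\Phi\omega_2$ and contradicting independence. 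You run the same ingredients forwards: independence makes the gcd $g$ of the minors a nonzero germ with finitely many irreducible factors, and for each factor $h$ the bad parameters form a $\C$-linear subspace $K_h\subseteq\C^2$ whose dimension is at most one by the no-common-component hypothesis. Note that your kernel-dimension step is exactly the paper's ``distinct $t$'s give distinct $g_t$'s'' observation in contrapositive form, so the two proofs use both hypotheses at the same places. What your direct organization buys is an effective statement: the number of exceptional points $(a:b)\in\mathbb{P}^1$ is bounded by the number of distinct irreducible factors of $g$, whereas the contradiction argument yields only finiteness; it also handles the full projective parameter $(a:b)$ cleanly, while the paper's affine parametrization $\omega_1+t\omega_2$ tacitly leaves out the value $(0:1)$.
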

\begin{proof}
Suppose, by contradiction, that the  result is false.
Then, for infinitely many values of $t \in \C$, the $1-$form $\omega_1+t\omega_2 \in \cl{P}$ has some   component  of codimension one in its singular set,
defined by and irreducible $g_t \in \cl{O}_{n}$. Let us consider these values of $t$.
Writing
$\omega_1=\sum_{i=1}^{n}A_idx_i$ and $ \omega_2=\sum_{i=1}^{n}B_idx_i$,
where $A_i, B_i\in \mathcal{O}_n$, we have that, for each pair $i,j$, with $1\leq i < j\leq n$, both  $A_i+tB_i=0$ and $A_j+tB_j=0$ are zero over $g_t=0$, implying  that $A_iB_j-A_jB_i=0$ over this same set. 
However, the fact that $\sing(\omega_1)$ and  $\sing(\omega_2)$ do not have a common component of codimension one imply that
independent functions $g_{t}$ are associated to different values of $t$.
Hence $A_iB_j-A_jB_i \equiv 0$ and, consequently, $A_i/B_i \equiv A_j/B_j$ for each pair $i,j$. By setting $\Phi = A_i/B_i$ --- which is independent of the chosen $i$ --- we have a germ of meromorphic function at $(\C^{n},0)$ such that $\omega_1=\Phi\omega_2$. This    contradicts the fact that $\omega_1$ and $\omega_2$ are independent $1-$forms.
\end{proof}

Consider a pair of germs of holomorphic $1-$forms   $\omega_1$ and $\omega_2$  as in the   lemma.
We say that $\cl{P} = \cl{P}(\omega_{1},\omega_{2})$
is   a \emph{pencil of integrable $1-$forms} or
an \emph{integrable pencil}  if all its elements  are integrable $1-$forms, that
is, $\omega \wedge d \omega = 0$ for all $\omega \in \cl{P}$.
This is equivalent to the following fact,
which will be referred to as \emph{pencil condition}:
\begin{equation}
\label{pencil}
\omega_1\wedge d\omega_2+\omega_2\wedge d\omega_1=0.
\end{equation}
Observe that, after possibly cancelling codimension one components in the singular set,
we associate to each $\omega_{(a,b)} \in \cl{P}$  a germ of singular holomorphic foliation $\cl{F}_{t}$, where
$t = (a:b)\in \pe^{1}$. For this reason, we also treat this object as \emph{pencil of
holomorphic foliations}.
The $2-$form $\omega_1\wedge\omega_2$ is also integrable, defining, after cancelling singular components of codimension one, a singular holomorphic foliation of codimension two which is
tangent to all foliations (associated to $1-$forms) in $\cl{P} = \cl{P}(\omega_{1},\omega_{2})$.
This codimension two foliation  is called \emph{axis} of $\cl{P}$.

\begin{example}
 {\rm (Logarithmic $1-$forms) Take independent irreducible germs of functions $f_{1},\ldots,f_{k} \in \cl{O}_{n}$, where $k \geq 2$. Consider also $(\lambda_{1},\ldots,\lambda_{k})$ and $(\mu_{1},\ldots,\mu_{k})$ two $\C$-linearly independent $k$-uples of numbers in $\C^{*}$.
Then, the holomorphic $1-$forms
\[ \omega_{1} = f_{1} \cdots f_{k}\left( \lambda_{1} \frac{d f_{1}}{f_{1}} \cdots \lambda_{k} \frac{d f_{k}}{f_{k}} \right) \qquad \text{and} \qquad
 \omega_{2} = f_{1} \cdots f_{k}\left( \mu_{1} \frac{d f_{1}}{f_{1}} \cdots \mu_{k} \frac{d f_{k}}{f_{k}} \right)
  \]
are generators of a pencil of integrable $1-$forms. The axis foliation is defined by the $2-$form
\[ \frac{1}{f_{1} \cdots f_{k}} \omega_{1} \wedge \omega_{2} = \sum_{1 \leq i<j \leq k} (\lambda_{i}\mu_{j} - \lambda_{j}\mu_{i}) h_{ij}  df_{i} \wedge df_{j},\]
where $h_{ij} =  f_{1}\cdots \widehat{f_{i}} \cdots \widehat{f_{j}} \cdots f_{k}$
is the product of all gems $f_{\ell}$ with the exception of $f_{i}$ and  $f_{j}$. Remark that
 the germs of analytic hypersurfaces $\{f_{i}=0\}$ are invariant by all foliations in the integrable
 pencil, as well as by the axis foliation.
}\end{example}

Recall that the integrability of a  holomorphic  $1-$form $\omega $ at $(\mathbb{C}^n,0)$ is equivalent to
the following fact: there exists a meromorphic $1-$form  $\theta$ such that
$d\omega=\theta\wedge\omega$. The sufficiency of this condition is clear. To prove its necessity, it is enough to take  a  meromorphic vector field $Y$   such that $i_{Y}\omega=1$,    contract  by $Y$ both sides of the integrability condition $\omega \wedge d \omega = 0$ and take $\theta=-i_{Y}d\omega$.
Now, if $\cl{P} = \cl{P}(\omega_{1},\omega_{2})$ is an integrable pencil, we get
 meromorphic $1-$forms
  $\theta_1$ and $\theta_2$ satisfying
$d \omega_{1} =\theta_1\wedge\omega_1$ and $d \omega_{2} =\theta_2\wedge\omega_2$.
This, inserted   in the pencil condition \eqref{pencil}, becomes
$(\theta_1-\theta_2)\wedge \omega_1\wedge \omega_2 = 0$.
Then, we find germs of meromorphic functions $g_1,g_2$
 at $(\mathbb{C}^n,0)$ such that
$\theta_1-\theta_2=g_1\omega_1-g_2\omega_2$ (see   Proposition \ref{novaprop} below).
If we define $\theta=\theta_1-g_1\omega_1=\theta_2-g_2\omega_2$, it is straight to
see that
\begin{equation}
\label{eq-pencil-curvature}
 d \omega = \theta \wedge \omega \ \ \forall\   \omega \in \cl{P}.
 \end{equation}
The meromorphic $1-$form $\theta$  is uniquely defined by equation \eqref{eq-pencil-curvature}.
Its exterior derivative $d \theta$ is called \emph{ pencil curvature}, denoted by $k(\mathcal{P})$.

Before proceeding, we present the following result:
\begin{prop}
\label{novaprop}
Let $\omega_1$, $\omega_2$ and $\omega_3$ be   independent germs of holomorphic $1-$forms at $(\C^n,0)$, $n \geq 3$.
Suppose that there exists a non-zero holomorphic $2-$form $\eta$,
 locally decomposable
outside its singular set, that is tangent to each of these three $1-$forms,
i.e.,  $\eta \wedge \omega_{i} = 0$ for $i=1,2,3$.
Then there are germs of meromorphic functions   $\lambda_1$ and $\lambda_2$ at
$(\C^{n},0)$ such that  $\omega_3=\lambda_1\omega_1+\lambda_2\omega_2$.
\end{prop}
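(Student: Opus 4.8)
The plan is to reduce the statement to pointwise linear algebra on the dense open set where $\eta$ is nonzero and decomposable, and then to manufacture the coefficients $\lambda_1,\lambda_2$ as ratios of wedge products. First I would fix a point $p$ outside $\sing(\eta)$ and write $\eta_p=\alpha\wedge\beta$ with $\alpha,\beta$ linearly independent covectors; the tangency hypothesis $\eta\wedge\omega_i=0$ then forces $\omega_i(p)$ to lie in the $2$-plane $V_p=\mathrm{span}(\alpha,\beta)$. This is the step where $n\geq 3$ is essential: for a covector $\omega$ and independent $\alpha,\beta$, the identity $\alpha\wedge\beta\wedge\omega=0$ is equivalent to $\omega\in\mathrm{span}(\alpha,\beta)$, which one sees by completing $\alpha,\beta$ to a basis $\alpha,\beta,e_3,\dots,e_n$ and reading off the remaining coordinates from the nonzero, linearly independent $3$-forms $\alpha\wedge\beta\wedge e_k$ (for $n=2$ the condition would be vacuous, which is why the hypothesis $n\geq 3$ appears).

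Since $\wedge^2 V_p$ is one-dimensional and spanned by $\eta_p$, every product $\omega_i(p)\wedge\omega_j(p)$ is a scalar multiple of $\eta_p$. Thus, on the complement of $\sing(\eta)$, the holomorphic $2$-forms $\omega_1\wedge\omega_2$, $\omega_2\wedge\omega_3$ and $\omega_3\wedge\omega_1$ are pointwise proportional to $\eta$. Dividing a non-vanishing component of each by the corresponding component of $\eta$, and invoking the identity principle on the connected dense complement of the proper analytic set $\sing(\eta)$, I obtain germs of meromorphic functions $f,g_1,g_2$ with $\omega_1\wedge\omega_2=f\eta$, $\omega_2\wedge\omega_3=g_1\eta$ and $\omega_3\wedge\omega_1=g_2\eta$. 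By independence $\omega_1\wedge\omega_2\neq 0$, so $f\not\equiv 0$; if instead all three pairwise products vanished, the $\omega_i$ would all be meromorphic multiples of a single $1$-form and the conclusion would be immediate, so after relabelling I may assume $f\not\equiv 0$.

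Then I set $\lambda_1=-g_1/f$ and $\lambda_2=-g_2/f$, which are germs of meromorphic functions at $(\C^n,0)$, and consider $\xi=\omega_3-\lambda_1\omega_1-\lambda_2\omega_2$. A direct computation using $\omega_1\wedge\omega_2=f\eta$ gives $\xi\wedge\omega_1=g_2\eta+\lambda_2 f\eta=0$ and $\xi\wedge\omega_2=-g_1\eta-\lambda_1 f\eta=0$. Since $\omega_1$ and $\omega_2$ are pointwise independent on a dense set (because $\omega_1\wedge\omega_2\neq 0$), any covector annihilated by wedging with both of them must vanish there; hence $\xi\equiv 0$ by analytic continuation, which yields $\omega_3=\lambda_1\omega_1+\lambda_2\omega_2$, as desired.

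The main obstacle, or at least the point requiring care, will be the passage from pointwise proportionality to a genuine equality of germs with meromorphic coefficients: one must verify that the function obtained by dividing matching components of $\omega_i\wedge\omega_j$ and $\eta$ is independent of the chosen component and extends meromorphically across $\sing(\eta)$. This is precisely what the decomposability hypothesis buys us — it guarantees honest proportionality of the $2$-forms rather than a mere rank inequality — together with the fact that $\sing(\eta)$ is a proper analytic subset, so that its complement is connected and dense and the identity principle applies.
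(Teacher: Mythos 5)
Your proof is correct and follows essentially the same route as the paper's: pointwise linear algebra on the complement of a proper analytic set showing that the three covectors lie in the $2$-plane determined by the decomposable form $\eta$, recovery of the coefficients as ratios of matching components of wedge products (hence meromorphic germs), and the identity principle to conclude the relation holds as germs. The only cosmetic difference is that you route the proportionality through $\eta$ (writing $\omega_i\wedge\omega_j$ as meromorphic multiples of $\eta$ and setting $\lambda_i=-g_i/f$), whereas the paper wedges the pointwise relation $\omega_3=\lambda_1\omega_1+\lambda_2\omega_2$ directly with $\omega_2$ to isolate $\lambda_1$ as a quotient of coefficients of $\omega_3\wedge\omega_2$ and $\omega_1\wedge\omega_2$.
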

\begin{proof}
Denote by
 $T_{ij}=\tang(\omega_i,\omega_j)=\omega_i\wedge\omega_j=0$
 the set of tangencies between $\omega_i$ and $\omega_j$. Note that $T_{ij}$ contains $\sing(\omega_{i}) \cup \sing(\omega_{j})$.  Consider the analytic  set $S = \sing(\eta) \cup T_{12} \cup T_{13} \cup T_{23}$. In a small
 neighborhood of $0 \in \C^{n}$, for each $p \not\in S$,   $\omega_1(p), \omega_2(p)$ and $\omega_3(p)$ define hyperplanes which are pairwise transversal and contain  the subspace of codimension two defined by $\eta(p)$ ($\eta$ is locally decomposable).  Then, by elementary linear algebra, for each $p$ outside $S$, we can write
 $\omega_3  =\lambda_1  \omega_1  +\lambda_2  \omega_2$ ,
 for some uniquely defined $\lambda_1 , \lambda_2  \in \C$. We thus have functions
$\lambda_1$ and $\lambda_2$ defined outside $S$. Wedging the above expression by $\omega_{2}$,
we find $\omega_3 \wedge \omega_{2}  =\lambda_1  \omega_1 \wedge \omega_{2}$. Hence,
$\lambda_{1}$ can also be obtained as
a quotient between a  coefficient  of $\omega_3\wedge\omega_2$ and the corresponding coefficient of $\omega_1\wedge\omega_2$. This shows that it has a meromorphic extension to a neighborhood of $0 \in \C^{n}$, still
denoted by $\lambda_{1}$.
The same reasoning applies to $\lambda_{2}$. By analytic continuation, the relation $\omega_3=\lambda_1\omega_1+\lambda_2\omega_2$
  holds in a neighborhood of $0 \in \C^{n}$, proving the proposition.
\end{proof}

Next, in the framework of  the previous  result,
  we add  integrability  as an ingredient.
We obtain that if a distribution of codimension two is tangent to three independent foliations, then it
is tangent to infinitely many foliations that are in a pencil. More precisely, we have:

\begin{prop}
\label{proposicaodasetas}
Let $\omega_1$, $\omega_2$ and $\omega_3$ be  independent germs of integrable  $1-$forms
at $(\C^{n},0)$, $n \geq 3$, with singular sets of   codimension at least two.
Suppose that there exists a non-zero holomorphic $2-$form $\eta$,
 locally decomposable
outside its singular set, that is tangent to each $\omega_{i}$,
for $i=1,2,3$.
 Then  $\omega_1$, $\omega_2$  and $\omega_3$  define foliations that are in a pencil.
 Furthermore, $\eta$ is integrable, defining   the axis foliation of this pencil.
\end{prop}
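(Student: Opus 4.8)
The plan is to reduce everything to the pencil condition \eqref{pencil} by a single rescaling of the generators. First I would invoke Proposition \ref{novaprop}: since $\omega_1,\omega_2,\omega_3$ are independent and $\eta$ is a nonzero, locally decomposable $2$-form tangent to all three, there exist germs of meromorphic functions $\lambda_1,\lambda_2$ at $(\C^{n},0)$ with $\omega_3=\lambda_1\omega_1+\lambda_2\omega_2$. Because $\omega_3$ is not a meromorphic multiple of $\omega_1$ nor of $\omega_2$, neither $\lambda_1$ nor $\lambda_2$ vanishes identically.

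The conceptual heart of the argument is to absorb these meromorphic coefficients into the generators. I would set $\widetilde{\omega}_{1}=\lambda_1\omega_1$ and $\widetilde{\omega}_{2}=\lambda_2\omega_2$, which are meromorphic $1$-forms defining the same foliations as $\omega_1$ and $\omega_2$ and for which $\omega_3=\widetilde{\omega}_{1}+\widetilde{\omega}_{2}$ \emph{literally}. Multiplying an integrable $1$-form by a function preserves integrability, since $(\lambda\omega)\wedge d(\lambda\omega)=\lambda^{2}\,\omega\wedge d\omega$; hence $\widetilde{\omega}_{1}$ and $\widetilde{\omega}_{2}$ are integrable, and $\omega_3=\widetilde{\omega}_{1}+\widetilde{\omega}_{2}$ is integrable by hypothesis. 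Expanding $0=(\widetilde{\omega}_{1}+\widetilde{\omega}_{2})\wedge d(\widetilde{\omega}_{1}+\widetilde{\omega}_{2})$ and cancelling the two vanishing terms $\widetilde{\omega}_{i}\wedge d\widetilde{\omega}_{i}=0$ leaves precisely $\widetilde{\omega}_{1}\wedge d\widetilde{\omega}_{2}+\widetilde{\omega}_{2}\wedge d\widetilde{\omega}_{1}=0$, which is the pencil condition \eqref{pencil} for $\widetilde{\omega}_{1},\widetilde{\omega}_{2}$. After clearing the poles of $\lambda_1,\lambda_2$ and cancelling the resulting common codimension-one components — operations that do not change the associated foliations — one obtains holomorphic generators of an honest integrable pencil $\cl{P}$ whose members include the foliations of $\omega_1$, $\omega_2$, and $\omega_3=\widetilde{\omega}_{1}+\widetilde{\omega}_{2}$. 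This establishes that $\omega_1,\omega_2,\omega_3$ define foliations in a pencil.

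For the final assertion I would use that $\eta$ is locally decomposable with $\eta\wedge\omega_1=\eta\wedge\omega_2=0$. Since $\omega_1\wedge\omega_2\neq 0$, the $1$-forms $\omega_1,\omega_2$ span, at each regular point, the same codimension-two subspace annihilated by $\eta$; hence $\eta=h\,\omega_1\wedge\omega_2$ for a germ of meromorphic function $h$. Thus $\eta$ is proportional to the axis form $\widetilde{\omega}_{1}\wedge\widetilde{\omega}_{2}=\lambda_1\lambda_2\,\omega_1\wedge\omega_2$ of $\cl{P}$ and defines the same codimension-two foliation. As recalled before Proposition \ref{novaprop}, the axis of an integrable pencil is itself integrable, so $\eta$ is integrable and defines the axis foliation of $\cl{P}$.

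The main obstacle is not conceptual but bookkeeping: the passage from the meromorphic generators $\widetilde{\omega}_{i}$ back to holomorphic generators meeting the codimension-two hypothesis of Lemma \ref{lem-comp-codim2}, i.e. clearing the poles and removing the spurious codimension-one zero components while preserving both integrability and the relation $\omega_3=\widetilde{\omega}_{1}+\widetilde{\omega}_{2}$. By contrast, the decisive idea is the rescaling, which converts the meromorphic dependence $\omega_3=\lambda_1\omega_1+\lambda_2\omega_2$ into an actual sum and thereby collapses the integrability of the three given $1$-forms directly onto the pencil condition; note that this sidesteps any attempt to prove that the ratio $\lambda_1/\lambda_2$ is constant in the \emph{original} forms, which would require the pencil condition as an input and is therefore circular.
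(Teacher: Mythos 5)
Your proposal is correct and follows essentially the same route as the paper's proof: invoke Proposition \ref{novaprop}, absorb the meromorphic coefficients $\lambda_1,\lambda_2$ into the generators so that $\omega_3$ becomes an actual sum, read off the pencil condition \eqref{pencil} from the integrability of that sum, normalize to holomorphic generators satisfying the hypothesis of Lemma \ref{lem-comp-codim2}, and identify $\eta$ with the axis via local decomposability. The only difference is in the bookkeeping you defer: the paper multiplies both generators by the single function $\varphi=\mathrm{lcm}(\varphi_1,\varphi_2)$ and then shows by coprimality that no common codimension-one components occur, which is the same symmetric rescaling (by one common function --- precisely what keeps the pencil condition and the sum relation intact) that your clearing-and-cancelling procedure performs in a different order.
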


\begin{proof}
We start by applying Proposition \ref{novaprop}, finding that $\omega_3=\lambda_1\omega_1+\lambda_2\omega_2$, where $\lambda_1$ and $\lambda_2$ are germs of   meromorphic functions in $(\mathbb{C}^n,0)$. Write $\lambda_i=\psi_i/\varphi_i$, $i=1,2$, with $\psi_i,\varphi_i\in \mathcal{O}_n$ without common factors. Let $\varphi={\rm lcm}(\varphi_1,\varphi_2)$,
where ${\rm lcm}$ denotes the least common multiple.  We then have
\[
\varphi\omega_3=\varphi\lambda_1\omega_1+\varphi\lambda_2\omega_2.
\]
Writing
\[
\eta_1=\varphi\lambda_1\omega_1 = \frac{\varphi}{\varphi_{1}} \psi_{1} \omega_1, \qquad  \eta_2=\varphi\lambda_2\omega_2 = \frac{\varphi}{\varphi_{2}} \psi_{2} \omega_2 \qquad \text{and}  \qquad \eta_3=\varphi\omega_3 ,
\]
we have three integrable $1-$forms, defining the same foliations as $\omega_1$, $\omega_2$ and $\omega_3$,
satisfying
\begin{equation}
\eta_3=\eta_1+\eta_2,
\label{novaeta1}
\end{equation}
so that the pencil condition \eqref{pencil} holds for $\eta_1$ and  $\eta_2$.
Thus, $\cl{P}(\eta_{1},\eta_{2})$
will be an integrable pencil of $1-$forms if its
 generic element has a singular set of codimension at least two.
This   will follow  straight from Lemma \ref{lem-comp-codim2}
if we prove that
$\sing(\eta_1)$ and $\sing(\eta_2)$  do not have common components of codimension one.
Indeed, looking at    \eqref{novaeta1}, the  possible common irreducible components of codimension one of $\sing(\eta_1)$ and $\sing(\eta_2)$ are also components of  $\sing(\eta_3)$.
Fix an equation for such a component. It must be a factor of    $\varphi$, since
$\codim\, \sing(\omega_3) \geq 2$.
By definition of least common multiple, it cannot be a factor of both $\varphi/\varphi_{1}$ and $\varphi/\varphi_{2}$.
Suppose, for instance, that it is not a factor of $\varphi/\varphi_{1}$. Then it is evidently a factor of $\varphi_{1}$ and,
since it defines a component of zeroes of $\eta_{1}$, it   must be also a factor of $\psi_{1}$.
This gives a contradiction, since $\psi_{1}$ and  $\varphi_{1}$ where chosen without common factors.
Finally, the distributions of codimension two subspaces induced by $\eta$ and by the
integrable $2-$form $\eta_{1} \wedge \eta_{2}$ coincide outside
$\sing(\eta) \cup \tang(\eta_{1},\eta_{2})$, giving the last part of the proposition's statement.
\end{proof}

In the sequel we present  a characterization of pencils of integrable $1-$forms at $(\C^{n},0)$.
It is a local version of a result by D. Cerveau   on pencils of foliations in $\pe^{3}$   \cite{cerveau2002}. Our proof essentially follows the same arguments, adapting them to the
local setting.
\begin{teo}
\label{pencil-foliations}
Let $\cl{P}$ be  pencil of integrable $1-$forms  at $(\C^{n},0)$, $n \geq 3$.   Then, at
least one of the following conditions is satisfied:
\begin{enumerate}[label=(\alph*)]
\item  There exists a closed meromorphic
$1-$form $\theta$ such that $d \omega = \theta \wedge \omega$ for every $1-$form
$\omega \in \cl{P}$. When $\theta$ is holomorphic, all foliations in $\cl{P}$ admit  holomorphic  first integrals.
\item   The axis foliation of $\cl{P}$ is tangent to the levels of a non-constant meromorphic function.
    \end{enumerate}
In particular, there exists a germ of hypersurface at $(\C^{n},0)$ that is tangent to the
axis foliation  of $\cl{P}$.
\end{teo}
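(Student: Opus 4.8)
The plan is to work with the meromorphic $1$-form $\theta$ already attached to the pencil by equation \eqref{eq-pencil-curvature}, uniquely determined by $d\omega=\theta\wedge\omega$ for all $\omega\in\cl{P}$, and to read the dichotomy off its curvature $k(\cl{P})=d\theta$. First I would differentiate the defining relation: applying $d$ to $d\omega=\theta\wedge\omega$ and using $\theta\wedge\theta=0$ gives $d\theta\wedge\omega=0$ for every $\omega\in\cl{P}$, in particular $d\theta\wedge\omega_1=0$ and $d\theta\wedge\omega_2=0$. Since $\omega_1\wedge\omega_2\neq 0$, a pointwise linear-algebra argument (exactly of the type used in Proposition \ref{novaprop}, where a quotient of corresponding coefficients produces the meromorphic extension) shows that $d\theta=g\,\omega_1\wedge\omega_2$ for a uniquely determined germ of meromorphic function $g$. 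Thus the curvature is always a meromorphic multiple of the axis $2$-form, and the whole argument splits according to whether $g\equiv 0$.

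If $d\theta=0$ we are immediately in case (a), since the unique $\theta$ is then closed. For the holomorphic sub-statement I would argue as follows: when $\theta$ is holomorphic and closed, the Poincar\'e lemma gives a holomorphic $h$ with $\theta=dh$; then $d(e^{-h}\omega)=e^{-h}(d\omega-dh\wedge\omega)=e^{-h}(d\omega-\theta\wedge\omega)=0$, so $e^{-h}\omega$ is a closed holomorphic $1$-form, hence exact, $e^{-h}\omega=dF$. The function $F$ is a holomorphic first integral of the foliation defined by $\omega$, and this works simultaneously for every $\omega\in\cl{P}$.

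The crux is the case $d\theta\neq 0$, where I must produce case (b), i.e. a non-constant germ of meromorphic function $H$ that is a first integral of the axis, equivalently $dH\wedge\omega_1\wedge\omega_2=0$. The strategy, following \cite{cerveau2002} adapted to the germ setting, is to exploit the structure equations $d\omega_1=\theta\wedge\omega_1$, $d\omega_2=\theta\wedge\omega_2$, $d\theta=g\,\omega_1\wedge\omega_2$ together with $d^2=0$. Differentiating $d\theta=g\,\omega_1\wedge\omega_2$ and using $d(\omega_1\wedge\omega_2)=2\,\theta\wedge\omega_1\wedge\omega_2$ yields the key relation $(dg/g+2\theta)\wedge\omega_1\wedge\omega_2=0$, so that $dg/g+2\theta=a\omega_1+b\omega_2$ for meromorphic $a,b$. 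These identities say precisely that each foliation of the pencil carries a transverse projective structure whose curvature is controlled by $g$, and the non-vanishing of $d\theta$ is exactly the obstruction to flattening this structure to a transverse affine one (which would return us to case (a)). In the rigid case one integrates: by differentiating the relation once more and feeding the result back through the structure equations, I would extract a closed meromorphic $1$-form $\eta$ tangent to the axis, $\eta\wedge\omega_1\wedge\omega_2=0$; a local primitive of $\eta$, allowing the logarithmic and residual terms of a closed meromorphic form, is the sought $H$, since $dH\in\langle\omega_1,\omega_2\rangle$ forces $dH\wedge\omega_1\wedge\omega_2=0$. I expect the genuine difficulty to lie here: keeping control of the auxiliary coefficients $a,b$ and their derivatives, and guaranteeing that the constructed closed form integrates to a \emph{non-constant} meromorphic function rather than degenerating. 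This is where Cerveau's analytic-continuation and monodromy argument does the real work, and adapting it to a germ requires verifying that the polar loci appearing along the way are honest invariant hypersurfaces.

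Finally, for the concluding assertion I would exhibit the invariant hypersurface case by case. In case (b) any level $\{H=c\}$ --- or, to guarantee a germ through the origin, the zero or polar divisor of $H$ --- is tangent to the axis. In case (a) each foliation of $\cl{P}$ admits an invariant hypersurface: if $\theta$ is holomorphic this is a level of the first integral $F$ above, if $\theta$ has poles it is the polar divisor of the closed form $\theta$, and if $\theta\equiv 0$ it is a level of a primitive of one of the (then closed) generators. Since the axis is tangent to every foliation of the pencil, its leaves lie inside any such invariant hypersurface, so that hypersurface is automatically tangent to the axis, giving the ``in particular'' in all cases.
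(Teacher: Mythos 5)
Your Case~1 (closed curvature) and your derivation of $d\theta=g\,\omega_1\wedge\omega_2$ coincide with the paper's proof (its Case~1 and the Assertion opening its Case~2 are argued exactly as you indicate). The genuine gap is in the essential case $d\theta\neq 0$: after writing $\left(\frac{dg}{g}+2\theta\right)\wedge\omega_1\wedge\omega_2=0$ and $\frac{dg}{2g}+\theta=a\omega_1+b\omega_2$, you do not carry out the argument, but instead appeal to a ``transverse projective structure'' and to ``Cerveau's analytic-continuation and monodromy argument'' to produce the first integral --- which is precisely the step that must be proved. Worse, the concrete route you sketch (extract a closed meromorphic $1$-form $\eta$ tangent to the axis and take a local primitive $H$) fails as stated: a primitive of a closed meromorphic $1$-form is in general \emph{not} meromorphic, since non-zero residues produce logarithms, so tangency of a closed $\eta$ to the axis does not by itself yield a meromorphic first integral. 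You acknowledge this difficulty but do not resolve it, and resolving it is the whole content of case (b).

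The paper's own argument is an explicit computation with no monodromy, and it is worth seeing how it sidesteps the integration problem. From \eqref{dteta3} and Proposition \ref{novaprop} one writes $\frac{d\alpha}{2\alpha}+\theta=k_1\omega_1+k_2\omega_2$ with $k_1,k_2$ meromorphic; differentiating this identity, wedging with $\omega_1$ and with $\omega_2$, and subtracting \eqref{dteta3} gives
\[
\left(-\dfrac{1}{2}\dfrac{d\alpha}{\alpha}+\dfrac{dk_i}{k_i}\right)\wedge\omega_1\wedge\omega_2=0 ,
\]
i.e.\ the closed forms tangent to the axis are by construction logarithmic differentials $\frac{1}{2}\,d\log(k_i^2/\alpha)$ of \emph{meromorphic} functions, so $k_1^2/\alpha$ and $k_2^2/\alpha$ are honest meromorphic first integrals --- exactly the point your primitive-taking step cannot guarantee. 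The proof concludes by showing these cannot both be constant: otherwise $k_1/k_2=c_1\in\C$ and $\theta=-\frac{dk_2}{k_2}+k_2(c_1\omega_1+\omega_2)$ with $k_2(c_1\omega_1+\omega_2)$ closed, forcing $d\theta=0$, a contradiction. The paper also treats separately the subcase where $\alpha$ is a non-zero constant (there $\theta=\mu_1\omega_1+\mu_2\omega_2$ and one analyzes $\mu_2/\mu_1$), a degeneration your sketch does not address. Your handling of case (a) and of the final ``in particular'' claim is essentially the paper's, except that the invariance of the polar divisor of the closed form $\theta$ should be justified via the structure theorem for closed meromorphic $1$-forms of \cite{cerveau1982}, as the paper does.
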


\begin{proof}
The   two cases in the assertion correspond  to the pencil curvature $k(\mathcal{P})$ being zero or non-zero.

\smallskip \par \noindent  {\bf \underline{Case 1}} :  $k(\mathcal{P})=0$, that is,  the $1-$form $\theta$ in \eqref{eq-pencil-curvature} is closed.
In the purely
  meromorphic case, we can write, from \cite{cerveau1982},
\[
\theta=\sum_{i=1}^k \lambda_i\dfrac{df_i}{f_i}+d\left(\dfrac{h}{f_1^{n_1}\cdots f_k^{n_k}}\right),
\]
where  $ f_i \in \cl{O}_{n}$ are irreducible equations of the components of the polar set of $\theta$, $h \in \cl{O}_{n}$,   $\lambda_i\in \mathbb{C}$, $n_i\in \mathbb{N}$ for $i=1,\cdots,k$, with $\lambda_{i} = 0$ only if
$n_{i} > 0$. Condition
\eqref{eq-pencil-curvature} then says that each hypersurface $f_{i} = 0$ in invariant by
every $\omega \in \cl{P}$ and, hence, also by the axis of $\cl{P}$.

  Suppose, on the other hand, that $\theta$  holomorphic.
Since $d\theta=0$, there exists $h\in \mathcal{O}_n$ such that $\theta=dh$ and hence
$d\omega= dh\wedge\omega$ for every $\omega \in \cl{P}$.
Setting  $h^{\prime}=exp(h) \in \cl{O}_{n}^{*}$, we have
$dh^{\prime}/h^{\prime}= dh$ and thus
 $d\omega = \left(dh^{\prime}/ h^{\prime} \right) \wedge \omega$.
Then
$ d\left(\omega/ h^{\prime} \right) = 0 $, implying that
 there exists $f \in \cl{O}_{n}$ such that  $\omega / h =df$. Hence, each foliation in $\cl{P}$ has
 a holomorphic first integral, and this implies that the axis of $\cl{P}$ is completely integrable, that is, it has two independent  holomorphic
 first integrals. In particular, the axis foliation   leaves  invariant germs of analytic hypersurfaces.

\smallskip \par \noindent  {\bf {\underline{Case 2}}}:  $k(\mathcal{P})\neq 0$.
Let $\omega_{1}$ and $\omega_{2}$ be generators of the pencil.
We have the following:

\begin{assertion}
There exists a germ of meromorphic function $\alpha$ at $(\mathbb{C}^n,0)$ such that
\begin{equation}
d\theta=\alpha \omega_1\wedge \omega_2.
\label{alfa}
\end{equation}
\end{assertion}
\par \noindent \emph{Proof of the Assertion.}\
First note that, taking differentials in both sides of
 $d\omega_i=\theta\wedge \omega_i$,  we have
 $d\theta\wedge \omega_i = 0$   for $i=1,2$.
Now, let
 $q$ be a point near $0\in \mathbb{C}^n$ such that $(\omega_1\wedge \omega_2)(q)\neq 0$.
This means that  $\omega_1$ and $\omega_2$ are non-singular and linearly independent at $q$, so that
we can find analytic coordinates
 $(x_1,x_2,\cdots, x_n)$   at $q$ such that $ \omega_1=A_1dx_1$ and $\omega_2=A_2dx_2$, where  $A_1,A_2$
 are invertible germs of holomorphic functions at $q$.
Write $d\theta=B_{1,2}dx_1\wedge dx_2+ \sum_{(i,j) \neq (1,2)} B_{i,j}dx_i\wedge dx_j$, where each $B_{i,j}$ is a germ of the meromorphic function at $q$.  Since $ d\theta\wedge \omega_1=0$ and $ d\theta\wedge \omega_2=0$, we must have $B_{i,j}=0$ whenever $(i,j) \neq (1,2)$. Then, at $q$,
\[
d\theta=B_{1,2}dx_1\wedge dx_2=\frac{B_{1,2}}{A_1A_2} ( A_1  dx_1)\wedge ( A_2 dx_2)= \alpha  \omega_1\wedge \omega_2,
\]
where $\alpha$ is a germ of the meromorphic function at $q$. In this way, we produce  a meromorphic function $\alpha$, defined outside the set of tangencies   $\tang(\omega_{1},\omega_{2})$, which, by comparing
coefficients of $d\theta$ and $\omega_1\wedge \omega_2$ as we did in the proof of Proposition \ref{novaprop}, can be
extended to a meromorphic function defined in a neighborhood of $0 \in \C^{n}$. This proves the assertion.
\qed

\medskip
Now, we split our analysis in two subcases:

\smallskip \par \noindent  {\bf {\underline{Subcase 2.1}}}:
$\alpha$  is constant.  We claim that the axis  foliation of $\cl{P}$ is tangent to the levels of a meromorphic (possibly holomorphic) function.
Indeed,
  $\alpha \neq 0$, since $k(\mathcal{P})\neq 0$, so that, from the   exterior derivative of \eqref{alfa},
we get  that
$d\omega_1\wedge\omega_2-\omega_1\wedge d\omega_2  = 0$.
This, together with the pencil condition \eqref{pencil}, gives
\begin{equation*}
\omega_2\wedge d\omega_1=\omega_1\wedge d\omega_2=0.
\end{equation*}
Hence, using \eqref{eq-pencil-curvature}, we find that  $\theta \wedge \omega_1 \wedge \omega_2=0$.
Applying Proposition \ref{novaprop} to the $1-$forms $\omega_{1}, \omega_{2}$ and $\omega_{3} = \theta$
 and to the $2-$form $\eta = \omega_1 \wedge \omega_2$, we find    germs of meromorphic functions $\mu_1$ and $\mu_2$ at  $(\mathbb{C}^n,0)$ satisfying
\begin{equation*}
\theta=\mu_1\omega_1+\mu_2\omega_2.
\end{equation*}
Inserting this in \eqref{eq-pencil-curvature}, we get
\begin{equation*}
d\omega_1 =  -\mu_2\omega_1\wedge\omega_2
\qquad \text{and} \qquad
d\omega_2=   \mu_1\omega_1\wedge\omega_2.
\end{equation*}
If $\mu_1=0$ then $d\omega_2=0$, and then $\omega_2=d(g)$  for some $g\in\mathcal{O}_n$, which turns out
 to be a  holomorphic first  integral for the axis of $\cl{P}$. On the other hand, when   $\mu_{1} \neq 0$,
 the above equations give
$d\omega_1= -(\mu_2 /\mu_1)d\omega_2$,
which, by differentiation, yields
\begin{equation*}
  d(\mu_2/\mu_1)\wedge\omega_1\wedge\omega_2 = 0.
\end{equation*}
When $\mu_2/\mu_1$ is non-constant, we have at once that  $\mu_2/\mu_1$  is a meromorphic  first integral for the axis of $\cl{P}$. When $\mu_2/\mu_1 = c$ for some $c \in \C$, we have
$d\omega_1=-cd\omega_2$. That is to say,
 $\omega_1 + c \omega_2$ is a $1-$form in \cl{P} which is closed, and hence exact, yielding once again a   holomorphic first integral for the axis foliation.

\smallskip \par \noindent  {\bf {\underline{Subcase 2.2}}}:
  $\alpha$  is non-constant. Taking the exterior derivative of \eqref{alfa} and using
 \eqref{eq-pencil-curvature}, we obtain
\begin{equation}
\left(\frac{d\alpha}{2\alpha} + \theta \right) \wedge \omega_1 \wedge \omega_2 = 0.
\label{dteta3}
\end{equation}
Now, applying Proposition \ref{novaprop} to $\omega_{1}, \omega_{2},\omega_{3} = d \alpha/2 \alpha + \theta$ and $\eta = \omega_{1} \wedge \omega_{2}$,  we  can find $k_1$ and $k_2$, germs of meromorphic functions at $(\C^{n},0)$,  such that
\begin{equation}
\dfrac{d\alpha}{2\alpha}+\theta=k_1\omega_1+k_2\omega_2.
\label{eq-dteta2}
\end{equation}
Observe that, since $k(\cl{P}) \neq 0$, we have that either $k_1$ or $k_2$ is non-zero.
Taking the exterior derivative and  applying \eqref{eq-pencil-curvature}, we obtain
\begin{equation*}
d\theta = (k_1\theta+dk_1)\wedge\omega_1+(k_2\theta+dk_2)\wedge\omega_2.
\end{equation*}
This, wedged by    $ \omega_1$ and $\omega_2$, gives, respectively,
\begin{equation}
\left(\theta+\dfrac{dk_2}{k_2}\right) \wedge \omega_1\wedge \omega_2=0
\qquad \text{and} \qquad
\left(\theta+\dfrac{dk_1}{k_1} \right) \wedge \omega_1\wedge\omega_2=0.
\end{equation}
Subtracting   \eqref{dteta3},   we obtain, respectively,
\begin{equation} \label{eq1}
\left(-\dfrac{1}{2}\dfrac{d\alpha}{\alpha}+\dfrac{dk_2}{k_2}\right)\wedge \omega_1\wedge\omega_2=0
\qquad \text{and} \qquad
\left(-\dfrac{1}{2}\dfrac{d\alpha}{\alpha}+\dfrac{dk_1}{k_1}\right)\wedge \omega_1\wedge\omega_2=0.
\end{equation}
This allows us to conclude that
he meromorphic functions  $k_1^2/\alpha$ and $k_2^2/\alpha$
are constant on the
leaves of the axis foliation of $\cl{P}$. If one of these two functions is non-constant, we have
a meromorphic first integral for the axis foliation and the proof of the theorem
is accomplished. We claim that this actually happens.
Indeed,
suppose, by contradiction, that both $k_1^2/\alpha$  and  $k_2^2/\alpha$ are constant.
This   would imply
that $k_1/k_2$ (if $k_{2} \neq 0$) is also constant.  Writing $k_1/k_2=c_1 \in \C$,
 we get, from  \eqref{eq-dteta2},
\begin{equation}
\dfrac{d\alpha}{2\alpha}+\theta=k_2(c_1\omega_1+\omega_2)
\end{equation}
and, since $k^2_2/\alpha$ is constant, we find that
\begin{equation}
\label{eq-theta-k2}
\theta= -\dfrac{dk_2}{k_2}+ k_2(c_1\omega_1+\omega_2).
\end{equation}
Then,
applying \eqref{eq-pencil-curvature} to $ c_1\omega_1+\omega_2 \in \cl{P}$ ,
we find
\begin{equation}
d(c_1\omega_1+\omega_2)=-\dfrac{dk_2}{k_2}\wedge (c_1\omega_1+\omega_2).
\end{equation}
This gives that $k_2(c_1\omega_1+\omega_2)$ is closed. By \eqref{eq-theta-k2},
we would have $k(\cl{P}) = d \theta = 0$, reaching a contradiction.
 \end{proof}

Let us put  the previous discussion our initial three-dimensional context:

\begin{teo}
\label{teo-tangent-to-pencil}
Let $X$ be a  germ  of holomorphic vector field at $(\mathbb{C}^3,0)$  tangent to three independent foliations of codimension one. Then there exists  an integrable pencil $\mathcal{P}$ such that
  $X$ is tangent to all foliations in $\cl{P}$.   Furthermore, at least one of the following
  two conditions holds:
\begin{enumerate}
\item[i)] there exists a closed meromorphic $1-$form $\theta$ such that $d \omega = \theta \wedge \omega$ for every   $\omega \in \mathcal{P}$;
\item[ii)]   $X$ has    a non-constant meromorphic (possibly holomorphic) first integral.
\end{enumerate}
In particular, there exists a germ of analytic surface at $0 \in \C^{3}$ which is invariant by $X$.
\end{teo}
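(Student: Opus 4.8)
The plan is to translate the three-dimensional, vector-field statement into the codimension-two distribution language of Propositions \ref{novaprop} and \ref{proposicaodasetas} and of Theorem \ref{pencil-foliations}, which were stated in $(\C^{n},0)$ precisely for this purpose. First I would fix germs of integrable holomorphic $1$-forms $\omega_{1},\omega_{2},\omega_{3}$, with $\codim\,\sing(\omega_{i})\geq 2$, inducing the three given independent foliations, so that tangency of $X$ reads $i_{X}\omega_{i}=0$ for $i=1,2,3$. The device that links $X$ to a $2$-form is the contraction of the volume form: set $\Omega=dx_{1}\wedge dx_{2}\wedge dx_{3}$ and $\eta=i_{X}\Omega$. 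Assuming $X\not\equiv 0$, we have $\eta\neq 0$, and $\eta$ is automatically locally decomposable, since every $2$-form in three variables satisfies $\eta\wedge\eta=0$. From $i_{X}(\omega\wedge\Omega)=0$ (the wedge being a $4$-form) one gets the identity $\eta\wedge\omega=(i_{X}\omega)\,\Omega$ for every $1$-form $\omega$; hence $i_{X}\omega=0$ is equivalent to $\eta\wedge\omega=0$. In particular $\eta\wedge\omega_{i}=0$ for $i=1,2,3$. Moreover $i_{X}\eta=i_{X}i_{X}\Omega=0$, which says that $X$ spans the one-dimensional distribution determined by $\eta$; that is, $\eta$ defines the foliation $\F$ induced by $X$.

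With this dictionary in place, I would apply Proposition \ref{proposicaodasetas} to $\omega_{1},\omega_{2},\omega_{3}$ and $\eta$: it yields an integrable pencil $\cl{P}$ containing the three foliations, with axis foliation defined by $\eta$, i.e. by $X$. Because the generators of $\cl{P}$ are germs of meromorphic-function multiples of $\omega_{1}$ and $\omega_{2}$, and tangency is linear and unaffected by such multiplication, $i_{X}\omega=0$ holds for every $\omega\in\cl{P}$; thus $X$ is tangent to all foliations of the pencil. This already gives the first assertion of the theorem and identifies the axis of $\cl{P}$ with the one-dimensional foliation $\F$ of $X$.

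Next I would feed $\cl{P}$ into Theorem \ref{pencil-foliations}. Its alternative (a) --- a closed meromorphic $\theta$ with $d\omega=\theta\wedge\omega$ for all $\omega\in\cl{P}$ --- is verbatim condition (i). Its alternative (b) --- the axis is tangent to the levels of a non-constant meromorphic function $h$ --- becomes, under the identification axis $=\F$, the statement that $h$ is constant on the orbits of $X$, i.e. $h$ is a non-constant meromorphic first integral of $X$; this is condition (ii). Finally, the closing ``In particular'' clause of Theorem \ref{pencil-foliations} provides a germ of hypersurface tangent to the axis foliation; since the axis is $\F$, this hypersurface is a germ of analytic surface invariant by $X$, giving the last assertion.

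The routine parts above are the contraction identity and the linearity of tangency. The step requiring the most care is the dictionary itself: verifying that $\eta=i_{X}\Omega$ is the correct decomposable $2$-form, that the distribution it defines is exactly $\F$, and that the abstract conclusions stated for codimension-two objects in $(\C^{n},0)$ specialize to genuine $X$-invariant surfaces when $n=3$. A minor point to check within alternative (ii) is that a meromorphic first integral $h=P/Q$ cuts out an honest germ of analytic surface through the origin --- one takes a suitable fiber $\{P-cQ=0\}$, or a component of the polar divisor $\{Q=0\}$ --- but this is already subsumed in the final clause of Theorem \ref{pencil-foliations}.
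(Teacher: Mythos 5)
Your proposal is correct and follows exactly the route the paper intends: the paper states this theorem as a direct translation of Proposition \ref{proposicaodasetas} and Theorem \ref{pencil-foliations} into dimension three, and your dictionary $\eta = i_{X}\Omega$ (with $\eta\wedge\omega = (i_{X}\omega)\,\Omega$, automatic decomposability of $2$-forms in three variables, and the identification of the axis with the foliation of $X$) is precisely the mechanism the paper leaves implicit --- the same contraction device even appears in the paper's Jouanolou example. No gaps; your write-up in fact supplies details the paper omits.
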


We close this article with an example.

\begin{example}
{\rm (Jouanolou's Example) \
Consider, for $m \geq 2$, the vector field $X = x_{3}^{m} \partial/\partial x_{1} +
x_{1}^{m} \partial/\partial x_{2}  +
x_{2}^{m} \partial/\partial x_{3}$.
Then $\omega = i_{R}i_{X} \Omega$, where $\Omega = dx_{1}\wedge dx_{2} \wedge dx_{3}$
is the volume form and
$R = x_{1} \partial/\partial x_{1} +
x_{2} \partial/\partial x_{2}  +
x_{3} \partial/\partial x_{3}$ is the radial vector field,
is an integrable $1-$form, with homogeneous coefficients, that is invariant by $X$.
Since $i_{R}\omega = 0$,   $\omega$ defines a foliation on the complex projective plane
$\mathbb{P}^{2}_{\C}$ which leaves invariant no algebraic curve \cite{jouanolou1979}.
This is equivalent to saying that, at $0 \in \C^{3}$, the vector vector field $X$ leaves invariant no homogeneous
surface --- that is, a surface defined by the vanishing of a homogeneous polynomial in the
coordinates $(x_{1},x_{2},x_{3})$. Since $X$ itself is a homogeneous vector field, this implies
that $X$ is not tangent to any germ of analytic surface at $0 \in \C^{3}$.
By Theorem \ref{teo-tangent-to-pencil}, $X$ is cannot be tangent to three independent foliations.
}\end{example}


\bibliographystyle{plain}
\bibliography{referencias}

\medskip \medskip
\medskip \medskip
\noindent
Dan\'ubia Junca   \\
Departamento de Matem\'atica \\
Universidade Federal de Itajub\'a \\
Rua Irmã Ivone Drumond, 200\\
35903-087
Itabira - MG \\
BRAZIL \\
danubia@unifei.edu.br

\medskip \medskip
\medskip \medskip
\medskip \medskip

\noindent
Rog\'erio   Mol \\
Departamento de Matem\'atica \\
Universidade Federal de Minas Gerais \\
Av. Ant\^onio Carlos, 6627
\ C.P. 702 \\
30123-970 -
Belo Horizonte - MG \\
BRAZIL \\
rmol@ufmg.br

\end{document}